\documentclass[12pt]{amsart}
\usepackage{amsmath, amssymb, amsthm, latexsym}
\usepackage{tikz-cd}
\usepackage{amssymb,amscd,amsmath}
\usepackage{latexsym}
\usepackage[center]{caption}
\usepackage{tikz}
\usepackage{tikz}
\usepackage{mathrsfs}
\newcounter{braid}
\newcounter{strands}

\DeclareMathAlphabet{\bsf}{OT1}{cmss}{bx}{n}

\pgfkeyssetvalue{/tikz/braid height}{1cm}
\pgfkeyssetvalue{/tikz/braid width}{1cm}
\pgfkeyssetvalue{/tikz/braid start}{(0,0)}
\pgfkeyssetvalue{/tikz/braid colour}{black}
\pgfkeys{/tikz/strands/.code={\setcounter{strands}{#1}}}

\makeatletter
\def\cross{%
  \@ifnextchar^{\message{Got sup}\cross@sup}{\cross@sub}}

\def\cross@sup^#1_#2{\render@cross{#2}{#1}}

\def\cross@sub_#1{\@ifnextchar^{\cross@@sub{#1}}{\render@cross{#1}{1}}}

\def\cross@@sub#1^#2{\render@cross{#1}{#2}}

\def\render@cross#1#2{
  \def\strand{#1}
  \def\crossing{#2}
  \pgfmathsetmacro{\cross@y}{-\value{braid}*\braid@h}
  \pgfmathtruncatemacro{\nextstrand}{#1+1}
  \foreach \thread in {1,...,\value{strands}}
  {
    \pgfmathsetmacro{\strand@x}{\thread * \braid@w}
    \ifnum\thread=\strand
    \pgfmathsetmacro{\over@x}{\strand * \braid@w + .5*(1 - \crossing) * \braid@w}
    \pgfmathsetmacro{\under@x}{\strand * \braid@w + .5*(1 + \crossing) * \braid@w}
    \draw[braid] \pgfkeysvalueof{/tikz/braid start} +(\under@x pt,\cross@y pt) to[out=-90,in=90] +(\over@x pt,\cross@y pt -\braid@h);
    \draw[braid] \pgfkeysvalueof{/tikz/braid start} +(\over@x pt,\cross@y pt) to[out=-90,in=90] +(\under@x pt,\cross@y pt -\braid@h);
    \else
    \ifnum\thread=\nextstrand
    \else
     \draw[braid] \pgfkeysvalueof{/tikz/braid start} ++(\strand@x pt,\cross@y pt) -- ++(0,-\braid@h);
    \fi
   \fi
  }
  \stepcounter{braid}
}

\tikzset{braid/.style={double=\pgfkeysvalueof{/tikz/braid colour},double distance=1pt,line width=2pt,white}}

\newcommand{\braid}[2][]{%
  \begingroup
  \pgfkeys{/tikz/strands=2}
  \tikzset{#1}
  \pgfkeysgetvalue{/tikz/braid width}{\braid@w}
  \pgfkeysgetvalue{/tikz/braid height}{\braid@h}
  \setcounter{braid}{0}
  \let\sigma=\cross
  #2
  \endgroup
}
\makeatother

\input xypic
\newtheorem{theorem}{Theorem}
\newtheorem{proposition}[theorem]{Proposition}

\newtheorem{lemma}[theorem]{Lemma}

\newtheorem{corollary}[theorem]{Corollary}
\newtheorem{definition}[theorem]{Definition}

\makeatletter
\makeatother

\def\Z{\mathbb{Z}}

\def\md{\mathcal{D}}

\def\qed{\hfill$\square$\medskip}

\def\Zpk{\mathbb{Z}/p^{k}}
\def\Zpk1{\mathbb{Z}/p^{k-1}}

\newcommand{\rref}[1]{(\ref{#1})}

\newcommand{\beg}[2]{\begin{equation}\label{#1}#2\end{equation}}

\def\sl2{\widetilde{SL_{2}(\Z)}}

\def\md
\def\rank{\operatorname{rank}}

\title[]{Some remarks on Mackey functors}
\author{Sophie Kriz}

\begin{document}
\maketitle

\begin{abstract}

The purpose of this paper is mainly to record how certain homotopy-theoretical constructions on ordinary
$G$-equivariant cohomology spectra $HM$ for a Mackey functor $M$, in particular products and duality,
can be described on chain level. We will also discuss certain facts about modules over the constant Green functor 
$\underline{\Z}$.

\end{abstract}

\vspace*{10mm}
\tableofcontents

\vspace{5mm}

\section{Introduction}

\vspace{5mm}

Ordinary $RO(G)$-graded equivariant homology and cohomology for a compact Lie group was
defined by Lewis, May, and McClure in \cite{LewisMayMcClureOrdinary}.
This theory is thus represented by a $G$-equivariant spectrum in the sense of \cite{LewisMaySteinbergerEquivariant}.
For $G$ finite, however, equivariant (co)homology is also deeply connected with Mackey functors \cite{DressMackeyFunctors, GreenleesMackeyFunctors} and can be
characterized entirely on chain level.
As a beginning of the story, ordinary equivariant (co)homology is a special case of Bredon cohomology \cite{Bredon}, but more needs to be said to capture
the whole Mackey functor structure.
The purpose of this paper is to examine this story in more detail and show how certain constructions on
$G$-equivariant spectra can be done on chain level.
While much of the material presented here is ``standard" (with the exception, perhaps, of the last section),
some of it may not be easy to find in the literature.

Given a $G$-CW-complex $X$, we have the equivariant coefficient system-valued cellular chain complex $C_G(X)$ (see Section
\ref{MackeyFunctorSection}). Equivariant cohomology
(resp. homology) of $X$ with respect to a coefficient system (resp. a co-coefficient system) $M$ is the cohomology (resp. homology) of
\beg{EquivariantCoHomologyDefn}{Hom_{\mathscr{O}_G} (C_G(X), M) \text{ (resp. } C_G (X) \otimes_{\mathscr{O}_G} M).}
Of course, both constructions apply when $M$ is a Mackey functor, which is our main case of interest.
(The more general case corresponds to $\Z$-graded Bredon equivariant cohomology and homology theory \cite{Bredon}.)

One point is to clarify how this structure behaves under products.
From a spectral point of view, constructions \rref{EquivariantCoHomologyDefn} correspond to
$$F(X_+, HM), \; \; X_+ \wedge HM,$$
so the discussion of products reduces to multiplicative properties of equivariant Eilenberg-Mac Lane spectra \cite{LewisMayMcClureOrdinary}.

On the chain level, there is a tensor product $\boxtimes$ of coefficient systems such that
$$C_G (X\times Y) \cong C_G (X) \boxtimes C_G (Y).$$
We discuss this in Section \ref{CoefficientSystemLevel}. On the level of spectra, equivariant Eilenberg-Mac Lane spectra with coefficients
in a Mackey functor \cite{LewisMayMcClureOrdinary} are (rigid) module spectra
over the $E_{\infty}$ ring spectrum $H\mathscr{A}$ where $\mathscr{A}$ is the universal (Burnside ring) Green functor.
Thus, for Mackey functors $M, N$ we have the smash product
\beg{SmashProductFromMackeyFunctors}{HM \wedge_{H\mathscr{A}} MN.}
This is reflected in Mackey functors by the box-product $\Box$ (see \cite{DressMackeyFunctors, LiBoxProduct} and Section \ref{MackeyFunctorSection} below), and in fact,
\rref{SmashProductFromMackeyFunctors} is equivalent to the total left derived functor of $H(M\Box N)$.

We shall also discuss duality.
Starting from $G$-CW-complexes, the coefficient system-valued chain complex $C_G(X)$ is not sufficient, 
and we need to discuss what is the appropriate Mackey functor-valued chain complex $C_M (X)$. 
The right construction, which also carries $\boxtimes$ to $\Box$,
turns out to be the left Kan extension from coefficient systems to Mackey functors, which we discuss in Section \ref{MackeyFunctorSection}.

Duality is discussed in Section \ref{DualitySection}. Mackey functors form a closed symmetric monoidal category
with an internal Hom functor $Hom_M$. For a $G$-CW-complex $X$,
$Hom_M (C_M X,\mathscr{A})$ is the right Kan extension from co-coefficient systems to Mackey functors applied to
$$C_G^* X = Hom (C_G X, \Z).$$

We conclude with Section \ref{AppendixMackeyChains}, where we discuss the role of modules over the constant Green functor $\underline{\Z}$.
An example is
the Mackey functor
\beg{IntroductionFixedPointsMackeyFunctor}{C_{\underline{\Z}} X: G/ H \mapsto (CX)^H}
for a $G$-CW complex $X$.
Other aspects of equivariant homology with constant coefficients were discussed in a previous paper \cite{GeomFixedPoints}.
The facts presented here are, however, different.
One reason for the significance of Section \ref{AppendixMackeyChains} in the present story
is that it clarifies why we cannot just work with the fixed points of the ordinary chain complex as a chain-level model
of general equivariant homology with Mackey functor coefficients.
However, Mackey $\underline{\Z}$-modules are of independent interest and in some sense, can be
considered as an alternative type of representation theory.

In fact, \rref{IntroductionFixedPointsMackeyFunctor} turns out to correspond to the spectrum $X_+ \wedge H\underline{\Z}$, i.e. the universal $\underline{\Z}$-module valued
Mackey chain complex on $X$, where $\underline{\Z}$ denotes the constant Green functor.
While the $H$-fixed points of a $\Z[G]$-module, for a subgroup $H\subseteq G$, always give a Mackey $\underline{\Z}$-module,
the converse is not always true (we give a characterization of Mackey $\underline{\Z}$-modules).
We discuss, however, a certain sense in which the converse is true on the derived level.
Finally, we briefly discuss cofixed points which give, in some sense, an equivalent theory
for $G$ cyclic, but not in general.
We give an example.

\vspace{5mm}

\section{Coefficient Systems and Products}\label{CoefficientSystemLevel}

\vspace{5mm}

For a CW-complex $X$, we denote by $C_*(X)$ the cellular chain complex of $X$. We denote by $\mathscr{O}_G$ the orbit category of $G$.
A $G$-coefficient system is a functor $\mathscr{O}_G^{Op} \rightarrow Ab $ (the category of abelian groups). Likewise, a $G$-co-coefficient
system is a functor $\mathscr{O}_G \rightarrow Ab$. 
The category of finite $G$-sets and $G$-maps will be denoted by
``f.$G$-Sets." For a $G$-CW complex $X$, let
$$C_G (X) (G/H) := C_G (X^H)$$
denote the cellular coefficient-system-valued chain complex of $X$.
On the other hand, let
$$C^*_G(X)(G/H) : = Hom_{Ab}(C_G(X)(G/H), \Z)$$
denote the dual of $C_G(X)$. Since $Hom_{Ab}$ is contravariant in the first variable,
$C_G^*(X)$ is a co-coefficient system.

A {\em Mackey functor} is a pair consisting of a coefficient system and a co-coefficient system which agree on objects.
There is a compatibility condition which will be discussed in Section \ref{MackeyFunctorSection}.

In this section, we will define a ``tensor product" $A \boxtimes B$ of coefficient systems
$$A, B:\mathscr{O}_G^{Op}\rightarrow Ab$$
First, for any coefficient system $A$, we can define an extension
$$A: \text{f.}G \text{-Sets} \rightarrow Ab$$
by
$$A(\coprod G/H_i) = \oplus A (G/H_i)$$

We have the Cartesian product functor
\beg{CartesianFiniteGSets}{\times :\text{f.} G\text{-Sets}^{Op}\times \text{f.} G\text{-Sets}^{Op} \rightarrow \text{f.}G \text{-Sets}^{Op}.}
Recall that for a finite $G$-set $S$, the representable functor by $S$ is a coefficient system
$$F_S : G/H \mapsto \Z Map_G (G/H, S)$$
which is the value at $\Z$ of the left adjoint to evaluation at $S$ (the value of the left adjoint on any abelian group $A$ is obtained by tensoring with $A$, 
i.e.
$$A \mapsto A\otimes F_S).$$

To see this adjunction, which is a variant of the Yoneda lemma,
first we have 
$$A\otimes F_S (U) = A\{ \text{f.}G\text{-Sets} (U , S)\}$$
for a finite $G$-set $U$.
Then, for an additive functor
$$\Phi: \text{f.} G\text{-Sets}^{Op} \rightarrow Ab,$$
it is enough to show that a homomorphism on abelian groups
\beg{AssumingFunctor}{h:A \rightarrow \Phi(S)}
uniquely determines a natural transformation
\beg{DeterminedFunctor}{H:A\otimes F_S \rightarrow \Phi.}
We have
$$H(S)(a\otimes (Id:S\rightarrow S)) = h(a)$$
for $a\in A$.
Now for any $G$-set $U$ we must define
$$H(U): A\otimes F_S(U) = A \otimes \{U\rightarrow S\} \rightarrow \Phi (U)$$
by putting, for an $a\in A$, and a $f: U\rightarrow S$,
$$H(U) (a\otimes f) = (\Phi(f)) \circ H(S) (a\otimes Id_S) = \Phi(f)(h(a)).$$

Now, given coefficient systems $A, B : \text{f.} G\text{-Sets}^{Op}\rightarrow Ab$, first define
$$A\underline{\boxtimes} B : \text{f.} G\text{-Sets}^{Op}\times \text{f.} G\text{-Sets}^{Op} \rightarrow Ab$$
by $A\underline{\boxtimes} B (S, T) := A(S) \otimes B(T)$.
We define $A\boxtimes B : \mathscr{O}_G^{Op} \rightarrow Ab$ as the left Kan extension of $A\underline{\otimes} B$
by the Cartesian product \rref{CartesianFiniteGSets}.

\begin{lemma}\label{CoefficientSystemBoxtimesConnection}
\begin{enumerate}
\item\label{BoxtimesConnectionFinite}
For finite $G$-sets $S, T$ we have
$$F_S \boxtimes F_T = F_{S\times T}.$$

\item\label{BoxtimesConnectionCWComplexes}
For $G$-CW complexes $X, Y$, we have
$$C_G(X) \boxtimes C_G (Y) = C_G (X\times Y).$$
\end{enumerate}

\end{lemma}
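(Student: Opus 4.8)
The plan is to prove part \rref{BoxtimesConnectionFinite} first by a purely formal argument about left Kan extensions and representable functors, and then deduce part \rref{BoxtimesConnectionCWComplexes} from it by an additivity/compatibility argument applied cell-by-cell. For \rref{BoxtimesConnectionFinite}, I would use the universal property: by definition $A\boxtimes B$ is the left Kan extension of $A\underline{\boxtimes}B$ along the Cartesian product functor \rref{CartesianFiniteGSets}, so for any coefficient system $\Phi:\mathscr{O}_G^{Op}\rightarrow Ab$ one has a natural bijection between maps $A\boxtimes B\rightarrow\Phi$ and maps $A\underline{\boxtimes}B\rightarrow\Phi\circ\times$. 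Taking $A=F_S$, $B=F_T$, and using the Yoneda-type adjunction established just before the lemma (maps out of $F_S$ from an abelian group's worth of data, i.e.\ $Hom(F_S,\Phi)\cong\Phi(S)$ naturally), the right-hand side becomes $Hom(F_S\underline{\boxtimes}F_T,\ \Phi\circ\times)$. Unwinding $F_S\underline{\boxtimes}F_T(U,V)=F_S(U)\otimes F_T(V)=\Z Map_G(U,S)\otimes\Z Map_G(V,T)$ and evaluating the represented functor, this Hom-set is naturally isomorphic to $\Phi(S\times T)$, which by Yoneda again is $Hom(F_{S\times T},\Phi)$. Since this chain of natural bijections is natural in $\Phi$, the Yoneda lemma gives $F_S\boxtimes F_T\cong F_{S\times T}$.

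For part \rref{BoxtimesConnectionCWComplexes}, I would first record that $\boxtimes$ preserves colimits in each variable separately — this is immediate because left Kan extension is a left adjoint, hence cocontinuous, and the ``extension to f.$G$-Sets by direct sums'' step is also cocontinuous. Now filter $X$ and $Y$ by skeleta. A $G$-CW complex is built by attaching cells $G/H\times D^n$ along $G/H\times S^{n-1}$, so the cellular chain complex $C_G(X)$ is, in each degree, a direct sum of representable coefficient systems $F_{S}$ where $S$ ranges over the orbit types of the $n$-cells: concretely $C_G(X)_n=\bigoplus_\alpha F_{G/H_\alpha}$. The product $X\times Y$ inherits a $G$-CW structure whose $n$-cells are products of a $p$-cell of $X$ with a $q$-cell of $Y$, $p+q=n$, and the cellular chain complex of a product is the tensor product of chain complexes. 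Combining this with part \rref{BoxtimesConnectionFinite} degreewise — $F_{G/H}\boxtimes F_{G/K}=F_{G/H\times G/K}$, and $G/H\times G/K$ is precisely the orbit type appearing for the corresponding product cell — gives the isomorphism of graded objects $C_G(X)\boxtimes C_G(Y)\cong C_G(X\times Y)$; one then checks it is a chain map, which follows from naturality of all the identifications with respect to the attaching maps (equivalently, the Yoneda isomorphism in \rref{BoxtimesConnectionFinite} is natural in $S$ and $T$).

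The main obstacle I anticipate is bookkeeping rather than anything conceptual: matching up the $G$-CW structure on $X\times Y$ (and the resulting cellular chain complex, with its Koszul-type sign conventions in the differential) with the tensor product $C_*(X)\otimes C_*(Y)$ of coefficient-system-valued chain complexes, so that the degreewise isomorphism assembled from \rref{BoxtimesConnectionFinite} genuinely commutes with differentials. I would handle this by being careful that the identification in \rref{BoxtimesConnectionFinite} is natural in $S$ and $T$ (which it is, being built entirely from Yoneda and the defining adjunctions), so that it is compatible with the boundary maps induced by cellular attaching maps; the signs then work out exactly as in the classical (non-equivariant) Eilenberg–Zilber / cellular-product statement. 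A secondary point worth stating explicitly is that $\boxtimes$, being defined via a left Kan extension from the product of f.$G$-Sets, really does restrict correctly to $\mathscr{O}_G^{Op}$ on representables — but this is exactly what part \rref{BoxtimesConnectionFinite} asserts, so once that is in hand the rest is the skeletal induction sketched above.
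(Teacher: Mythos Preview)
Your proposal is correct and follows essentially the same route as the paper. For part \rref{BoxtimesConnectionFinite}, the paper also argues via composition of left adjoints (left adjoint to evaluation at $(S,T)$ composed with left Kan extension along $\times$ equals left adjoint to evaluation at $S\times T$), which is exactly your Yoneda chain phrased in adjoint language; for part \rref{BoxtimesConnectionCWComplexes}, the paper simply notes $C_G(X)_n=F_{I_n}$ and invokes \rref{BoxtimesConnectionFinite}, so your additional discussion of colimit-preservation, differentials, and signs is more careful than what the paper actually writes down.
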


\begin{proof}
To prove \rref{BoxtimesConnectionFinite}, the left Kan extension along \rref{CartesianFiniteGSets}
$$Funct(\text{f.} G\text{-Sets}^{Op} \times \text{f.} G\text{-Sets}^{Op}, Ab)\rightarrow Funct(\text{f.} G\text{-Sets}^{Op}, Ab)$$
is defined to be the left adjoint of $Funct(\times, Ab)$.
Then we shall study the composition of functors
\beg{LemmaCompositionABFUNCTS}{Ab \rightarrow Funct(\text{f.} G\text{-Sets}^{Op} \times \text{f.} G\text{-Sets}^{Op}, Ab)\rightarrow Funct(\text{f.} G\text{-Sets}^{Op}, Ab),}
where the first functor is the left adjoint to evaluation at some fixed $(S, T)$.
The composition \rref{LemmaCompositionABFUNCTS} is left adjoint to the functor that evaluates a functor in $Funct(\text{f.} G\text{-Sets}^{Op}, Ab)$ to $ S\times T$.
Thus it sends $\Z$ to $F_{S\times T}$. On the other hand, the first functor \rref{LemmaCompositionABFUNCTS} sends $\Z$ to $F_S \underline{\boxtimes} F_T$.

As above, this means that a homomorphism
$$h:A \rightarrow \Phi(S, T)$$
uniquely determines a natural transformation
$$H:A\otimes (F_S \underline{\boxtimes} F_T) \rightarrow \Phi.$$
By definition, we have
$$F_S \underline{\boxtimes} F_T (U, V) = \Z \{ U\rightarrow S\} \otimes \Z \{ V\rightarrow T\} = \Z \{ (U\rightarrow S, V\rightarrow T)\}.$$
Again, we have
$$H(S,T) (a\otimes (Id:S\rightarrow S, Id:T\rightarrow T)) = h(a)$$
for $a\in A$.
Now, for any finite $G$-set $U$, we must define
$$H(U,V) : A \otimes (F_S \underline{\boxtimes} F_T (U,V)) \rightarrow \Phi (U,V)$$
by putting, for $a\in A$, $f_1: U\rightarrow S$, $f_2: V\rightarrow T$,
$$H(U,V) (a\otimes (f_1 \otimes f_2))= (\Phi (f_1, f_2)) \circ H(S, T) (a \otimes (Id_S\otimes Id_T))=$$
$$= (\Phi (f_1, f_2))(h(a)).$$
This concludes the proof of \rref{BoxtimesConnectionFinite}.

Finally, \rref{BoxtimesConnectionCWComplexes} is an immediate consequence, since
$$C_G(X)_n = F_{I_n}$$
where $I_n$ is the set of $n$-cells of $X$.

\end{proof}

\vspace{5mm}

\section{Mackey Functors and Products}\label{MackeyFunctorSection}

\vspace{5mm}

Ordinary equivariant $RO(G)$-graded homology and cohomology, however, work on the level
of Mackey functors, not coefficient systems.
This becomes important when one studies duality.
Mackey functors have their own product, different from the tensor product $\boxtimes$ of coefficient systems.
It is called the {\em box product}, and we denote it by $\Box$ (see \cite{DressMackeyFunctors, LiBoxProduct}).
The purpose of this section is to relate the results of the last section to statements about $\Box$.
We begin with a more detailed treatment of Mackey functors.

\vspace{3mm}

Recall the Burnside category $\mathscr{B}_G$. The objects of $\mathscr{B}_G$ are finite $G$-sets. 
Morphisms in $\mathscr{B}_G$ from $T$ to $U$
are given by the group completion of the set of isomorphism classes (in the $S$-coordinate) of diagrams
of finite $G$-sets
$$
\diagram
 & S\drto\dlto & \\
T & & U\\
\enddiagram
$$
with respect to the operation of coproducts (again in the $S$-coordinate).
The composition of two diagrams is defined in the obvious way using pullbacks (for details, see \cite{DressMackeyFunctors}).

Then a Mackey functor is defined to be an additive functor
$$\mathscr{B}_G \rightarrow Ab.$$
The category of Mackey functors over $G$ is denoted by $Mackey_G$.
(Note that by definition, $\mathscr{B}_G$ is self-dual.)
There is an operation for Mackey functors similar to the operation for coefficient systems $\boxtimes$ defined in the previous section.

This ``tensor product" of Mackey functors is denoted by $\Box$ (c.f. \cite{DressMackeyFunctors, LiBoxProduct}). 
We will review this construction.
First, as above in the previous section, given Mackey functors $M,N: \mathscr{B}_G\rightarrow Ab$,
define
\beg{UnderlinedBox}{M\underline{\Box} N : \mathscr{B}_G\times \mathscr{B}_G\rightarrow Ab}
by $M\underline{\Box} N (S, T) := M(S) \otimes N(T)$.
On the other hand, we also have a ``Cartesian product"
\beg{CartesianProductBurnside}{\times:\mathscr{B}_G \times \mathscr{B}_G \rightarrow \mathscr{B}_G.}
(Note that \rref{UnderlinedBox} and \rref{CartesianProductBurnside} are biadditive functors.
Also note that to define \rref{CartesianProductBurnside}, one must consider the group completion.)
Define $M\Box N$ to be the left Kan extension of $M\underline{\Box} N$ via \rref{CartesianProductBurnside}.

\vspace{5mm}

\noindent {\bf Comment:}

Commutative monoids $\mathscr{G}$ with respect to the box product 
$$\mathscr{G}\Box \mathscr{G} \rightarrow \mathscr{G}$$
are called Green functors.
A module $\mathscr{M}$ over a Green functor $\mathscr{G}$ is defined by
$$\mathscr{M} \Box \mathscr{G} \rightarrow \mathscr{M}$$
with the usual axioms.

Of course, we also must consider the unit of the box product. This is the universal Green functor $\mathscr{A}$, i.e. the Burnside ring functor. 
By the above method, it is the left Kan
extension of the functor
$$*\rightarrow Ab$$
$$* \mapsto \Z$$
via the inclusion
$$* \rightarrow \mathscr{B}_G$$
$$* \mapsto G/G.$$
Then $\mathscr{A}(G/H)$ is the value of the functor that sends
$$G/H \mapsto K \{ G/G \leftarrow S \rightarrow G/H\} = K \{ S\rightarrow G/H\},$$
where $K$ denotes the group completion.
This is is the Burnside ring of $H$, since the category finite $G$-sets over $G/H$ is equivalent to the category of finite $H$-sets via the functor
$$\{ S\rightarrow G/H\} \rightarrow H\text{-Sets}$$
$$f\mapsto f^{-1}(*).$$

\vspace{5mm}

\begin{lemma}\label{BoxTimesVSBoxConnection}
The left Kan extension
$$L: Funct(\mathscr{O}_G^{Op}, Ab) \rightarrow Mackey_G$$
takes $\boxtimes$ to $\Box$, i.e.
$$L(A\boxtimes B) = L(A) \Box L(B)$$
for coefficient systems $A, B$.

\end{lemma}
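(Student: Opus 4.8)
The plan is to exploit the defining adjunctions. Recall that the left Kan extension $L: Funct(\mathscr{O}_G^{Op}, Ab) \to Mackey_G$ is left adjoint to the restriction functor $res: Mackey_G \to Funct(\mathscr{O}_G^{Op}, Ab)$ that discards the transfers (forgets the covariant part). Both $\boxtimes$ and $\Box$ are themselves defined as left Kan extensions along Cartesian product functors (on $\text{f.}G\text{-Sets}^{Op}$ and on $\mathscr{B}_G$ respectively), so all three operations in the statement are left adjoints. Since a composite of left adjoints is a left adjoint, and left adjoints are unique up to natural isomorphism once their right adjoint is pinned down, it suffices to check that the two functors
$$(A,B) \mapsto L(A\boxtimes B), \qquad (A,B)\mapsto L(A)\Box L(B)$$
from $Funct(\mathscr{O}_G^{Op},Ab)^{\times 2}$ to $Mackey_G$ have the same right adjoint (in each variable), or equivalently that they agree after precomposition with the representables $F_S$, since every coefficient system is a colimit of representables and all functors in sight preserve colimits.

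First I would reduce to representables: by Lemma \ref{CoefficientSystemBoxtimesConnection}\rref{BoxtimesConnectionFinite} we have $F_S\boxtimes F_T = F_{S\times T}$, so $L(F_S\boxtimes F_T) = L(F_{S\times T})$. Next I would identify $L(F_S)$: the coefficient system $F_S$ is, by the discussion in Section \ref{CoefficientSystemLevel}, the value at $\Z$ of the left adjoint to evaluation at $S$, i.e. $F_S = \Z\otimes F_S$ represents $\Phi\mapsto \Phi(S)$ on coefficient systems. Applying $L$ and using the composite adjunction (evaluation of a Mackey functor at $S$ factors as evaluation of its restriction at $S$), one gets that $L(F_S)$ is the representable Mackey functor $\mathbb{A}_S := \mathscr{B}_G(-,S)$ (group-completed), i.e. the value at $\Z$ of the left adjoint to evaluation at $S$ on $Mackey_G$; in particular $L(F_{G/G}) = \mathscr{A}$ as noted in the Comment. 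Then the computation of $\Box$ on representable Mackey functors — done exactly as in the proof of Lemma \ref{CoefficientSystemBoxtimesConnection}\rref{BoxtimesConnectionFinite}, but now along \rref{CartesianProductBurnside} instead of \rref{CartesianFiniteGSets} — gives $\mathbb{A}_S\Box\mathbb{A}_T = \mathbb{A}_{S\times T}$. Combining, $L(F_S)\Box L(F_T) = \mathbb{A}_S\Box\mathbb{A}_T = \mathbb{A}_{S\times T} = L(F_{S\times T}) = L(F_S\boxtimes F_T)$, which is the claim on representables.

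To finish, I would promote the isomorphism from representables to all coefficient systems. Write arbitrary coefficient systems $A, B$ as colimits of representables, $A = \operatorname*{colim}_i F_{S_i}$, $B = \operatorname*{colim}_j F_{T_j}$ (possible since $Funct(\mathscr{O}_G^{Op},Ab)$ is generated under colimits by the $F_S$). Since $L$ is a left adjoint it preserves colimits; since $\boxtimes$ is defined as a left Kan extension it preserves colimits in each variable; and likewise $\Box$ preserves colimits in each variable. Hence both sides of the desired identity, viewed as functors of $(A,B)$, are cocontinuous in each variable, so the natural isomorphism on representables extends uniquely to a natural isomorphism $L(A\boxtimes B)\cong L(A)\Box L(B)$, and one checks it is compatible with the structure maps (this is automatic from the colimit presentation).

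The main obstacle I expect is not any single hard computation but rather bookkeeping: making the identification $L(F_S) = \mathbb{A}_S$ genuinely precise, i.e. chasing the composite of the two Yoneda-type adjunctions (coefficient-system representability, then $L\dashv res$) and verifying it lands on the Burnside-category representable rather than on some naively-defined variant, and then checking that the isomorphisms $F_S\boxtimes F_T\cong F_{S\times T}$ and $\mathbb{A}_S\Box\mathbb{A}_T\cong\mathbb{A}_{S\times T}$ are natural in $S$ and $T$ in a compatible way so that the extension-by-colimits step actually produces a natural transformation of bifunctors. All of this is formal, but one must be careful that "left Kan extension along Cartesian product" is taken in the additive (biadditive) sense and that group completion in \rref{CartesianProductBurnside} is handled consistently; I would state these compatibilities as the substance of the argument and leave the diagram chases to the reader.
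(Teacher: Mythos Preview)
Your proof is correct but takes a genuinely different route from the paper. The paper factors both $\boxtimes$ and $\Box$ as an external tensor ($\underline{\boxtimes}$, resp.\ $\otimes$) followed by the left Kan extension $\times_\#$ along the Cartesian product, and then verifies via a chain of commutative squares that the Kan extensions $L$ and its two-variable analogue $\mathscr{L}$ intertwine each stage; the result falls out by pasting the squares. You instead reduce to representables: identify $L(F_S)$ with the representable Mackey functor $\mathscr{B}_G(S,-)$, invoke Lemma~\ref{CoefficientSystemBoxtimesConnection}\rref{BoxtimesConnectionFinite} and its Burnside-category analogue to get the statement on representables, and extend by cocontinuity in each variable. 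Your approach is more concrete and recycles the computation already done in Lemma~\ref{CoefficientSystemBoxtimesConnection}, at the cost of having to check naturality of the isomorphism on representables and the density/cocontinuity extension step (which you rightly flag as the bookkeeping burden). The paper's approach sidesteps those naturality checks by working entirely with commuting squares of left adjoints, but in exchange introduces the auxiliary category $BiAdd(\mathscr{B}_G\times\mathscr{B}_G,Ab)$ and the two-variable Kan extension $\mathscr{L}$, which your argument avoids.
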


\begin{proof}

We have left Kan extensions
$$L: Funct(\mathscr{O}_G^{Op}, Ab) \rightarrow Mackey_G = Add(\mathscr{B}_G, Ab)$$
and
$$\mathscr{L}: Funct(\mathscr{O}_G^{Op}\times \mathscr{O}_G^{Op}, Ab) \rightarrow BiAdd(\mathscr{B}_G \times \mathscr{B}_G, Ab)$$
where $Add$ denotes the category of additive functors and $BiAdd$ denotes the category of biadditive functors.
Then we can form a diagram
$$
\diagram
Funct(\mathscr{O}_G^{Op}, Ab)\times Funct(\mathscr{O}_G^{Op}, Ab) \rto^(0.575){\underline{\boxtimes}}\dto_{L\times L} &Funct(\mathscr{O}_G^{Op}\times \mathscr{O}_G^{Op}, Ab)\dto^{\mathscr{L}}\\
Add(\mathscr{B}_G, Ab)\times Add(\mathscr{B}_G, Ab) \rto_{\otimes} & BiAdd(\mathscr{B}_G\times\mathscr{B}_G, Ab)\\
\enddiagram
$$
This diagram commutes, since for coefficient systems $A, B$,
$$\mathscr{L}(A\otimes B) = \mathscr{B}_G\times \mathscr{B}_G \otimes_{\mathscr{O}_G^{Op}\times\mathscr{O}_G^{Op}}A\otimes B=L(A)\otimes L(B).$$
On the other hand, the diagram
$$
\diagram
Add(\mathscr{B}_G, Ab) \times Add(\mathscr{B}_G, Ab)\rto^(0.55)\otimes \drto_{\Box} & BiAdd(\mathscr{B}_G\times \mathscr{B}_G, Ab)\dto^{\times_{\#}} \\
 & Add(\mathscr{B}_G, Ab)
\enddiagram
$$
where $\times_{\#}$ the left Kan extension of $\times$, commutes since their right adjoints commute.
Also, we have a commutative diagram
$$
\diagram
Funct(\mathscr{O}_G^{Op} \times \mathscr{O}_G^{Op}, Ab) \rrto^(0.55){\times_{\#}}\dto_{\mathscr{L}} &  &Funct(\mathscr{O}_G^{Op}, Ab)\dto^L \\
BiAdd(\mathscr{B}_G \times \mathscr{B}_G. Ab) \rrto_{\times_{\#}} & & Add(\mathscr{B}_G, Ab).\\
\enddiagram
$$
Also,
$$
\diagram
Funct(\mathscr{O}_G^{Op}, Ab)\times Funct(\mathscr{O}_G^{Op}, Ab) \dto_{\underline{\boxtimes}} \drto^{\boxtimes}& \\
Funct(\mathscr{O}_G^{Op}\times \mathscr{O}_G^{Op}, Ab)\rto_(0.55){\times_{\#}} & Funct (\mathscr{O}_G^{Op}, Ab)\\
\enddiagram
$$
commutes by definition.

Therefore, by combining the diagrams, we get that
$$
\diagram
Funct(\mathscr{O}_G^{Op}, Ab)\times Funct(\mathscr{O}_G^{Op}, Ab)\dto_{L\times L} \rto^(0.65){\boxtimes}& Funct(\mathscr{O}_G^{Op}, Ab)\dto^L \\
Add(\mathscr{B}_G, Ab)\times Add(\mathscr{B}_G, Ab) \rto^(0.6){\Box}&Add(\mathscr{B}_G, Ab) \\
\enddiagram
$$
commutes.

\end{proof}

\begin{definition}
For a $G$-CW-complex $X$, define
$$C_M(X)= L C_G (X).$$
For a based $G$-CW-complex $X$, its based Mackey complex $\widetilde{C}_M(X)$ is defined analogously using $\widetilde{C}_n(X)$.
\end{definition}

A simpler construction may come to mind: sending $H \mapsto (C(X))^H$ also forms a Mackey functor.
However, this turns out to be the wrong construction for the present purpose.
We clarify the role of this construction (``constant coefficients in a looser sense") in Section \ref{AppendixMackeyChains}.

\begin{lemma}
For $G$-CW-complexes $X, Y$, 
$$C_M(X\times Y) = C_M(X) \Box C_M(Y).$$
For based $G$-CW-complexes $X, Y$,
$$\widetilde{C}_M(X\wedge Y) = \widetilde{C}_M(X) \Box \widetilde{C}_M(Y).$$
\end{lemma}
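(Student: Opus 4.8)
\Proofs The plan is to deduce both equalities formally from Lemmas \ref{CoefficientSystemBoxtimesConnection} and \ref{BoxTimesVSBoxConnection} together with the definition $C_M(X) = L\,C_G(X)$. The one preliminary remark is that $\boxtimes$ and $\Box$ are extended to chain complexes by totalizing the degreewise products with the usual Koszul signs (as is already implicit in Lemma \ref{CoefficientSystemBoxtimesConnection}\rref{BoxtimesConnectionCWComplexes}), and that $L$, being a left Kan extension and hence a left adjoint, commutes with the direct sums and cokernels out of which these products and the totalization functor are built, and with differentials since it is a functor.

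For the unbased statement, Lemma \ref{CoefficientSystemBoxtimesConnection}\rref{BoxtimesConnectionCWComplexes} gives $C_G(X\times Y) = C_G(X)\boxtimes C_G(Y)$, and Lemma \ref{BoxTimesVSBoxConnection} gives $L(A\boxtimes B) = L(A)\Box L(B)$ for coefficient systems $A,B$. Applying $L$ to the first identity degreewise and invoking the second,
\[ C_M(X\times Y) \;=\; L\,C_G(X\times Y) \;=\; L\bigl(C_G(X)\boxtimes C_G(Y)\bigr) \;=\; L\,C_G(X)\Box L\,C_G(Y) \;=\; C_M(X)\Box C_M(Y). \]

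For the based statement, I would first record the reduced analogue of Lemma \ref{CoefficientSystemBoxtimesConnection}\rref{BoxtimesConnectionCWComplexes}, namely $\widetilde{C}_G(X\wedge Y) = \widetilde{C}_G(X)\boxtimes\widetilde{C}_G(Y)$, where $\widetilde{C}_G(X)_n = F_{\widetilde{I}_n(X)}$ and $\widetilde{I}_n(X)$ denotes the $G$-set of non-basepoint equivariant $n$-cells of $X$. This follows from Lemma \ref{CoefficientSystemBoxtimesConnection}\rref{BoxtimesConnectionFinite} by exactly the argument used in the proof of that lemma to derive \rref{BoxtimesConnectionCWComplexes} from it: the non-basepoint $n$-cells of $X\wedge Y = (X\times Y)/(X\vee Y)$ are precisely the products $e\times f$ of a non-basepoint $p$-cell $e$ of $X$ with a non-basepoint $q$-cell $f$ of $Y$ with $p+q=n$, so $\widetilde{C}_G(X\wedge Y)_n = F_{\coprod_{p+q=n}\widetilde{I}_p(X)\times\widetilde{I}_q(Y)} = \bigoplus_{p+q=n} F_{\widetilde{I}_p(X)}\boxtimes F_{\widetilde{I}_q(Y)} = \bigl(\widetilde{C}_G(X)\boxtimes\widetilde{C}_G(Y)\bigr)_n$ by \rref{BoxtimesConnectionFinite}, and the two differentials agree by the Leibniz rule for the cellular boundary on a product. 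Applying $L$ and Lemma \ref{BoxTimesVSBoxConnection} as in the unbased case then gives $\widetilde{C}_M(X\wedge Y) = \widetilde{C}_M(X)\Box\widetilde{C}_M(Y)$.

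The only step with any genuine content is the basepoint bookkeeping in this reduced smash-product identification, and I expect that to be the ``hard'' part even though it is routine; everything else is formal once the degreewise identities of Lemmas \ref{CoefficientSystemBoxtimesConnection} and \ref{BoxTimesVSBoxConnection} and the left-adjointness of $L$ are in hand.
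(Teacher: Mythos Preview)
Your proposal is correct and follows exactly the paper's approach: apply Lemma~\ref{CoefficientSystemBoxtimesConnection}\rref{BoxtimesConnectionCWComplexes} to get $C_G(X\times Y)=C_G(X)\boxtimes C_G(Y)$, then apply $L$ and invoke Lemma~\ref{BoxTimesVSBoxConnection}, with the based case handled analogously by omitting the basepoint cell. The paper's own proof is a one-liner citing those two lemmas; you have simply spelled out the details (including the chain-level extension of $L$ and the basepoint bookkeeping) that the paper leaves implicit.
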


\begin{proof}
This follows immediately from Lemma \ref{CoefficientSystemBoxtimesConnection} and Lemma \ref{BoxTimesVSBoxConnection}.
The based case is analogous, omitting the base point cell.
\end{proof}

To give an example how these constructions are used, given an element $e_V$ of 
$\widetilde{H}_{m|V|}^G (S^{mV};\mathscr{A})$, one can use it to construct a map, for a based $G$-CW-complex $X$,
$$\widetilde{H}_k (X;M) \rightarrow \widetilde{H}^G_{k+m|V|}(X\wedge S^{mV}, M).$$
This can be described on chain level as follows. We have maps:
\beg{VerticalDiagramForIsomorphismForBorel}{
\diagram
\widetilde{H}_k (X;M) = H_k((\widetilde{C}_M(X) \Box M)(G/G))\dto^{\otimes e_V}\\
 H_k((\widetilde{C}_M (X)\Box M) (G/G) \otimes H_{m|V|} (\widetilde{C}_M(S^{mV})(G/G)\dto \\
H_{k+m|V|}((\widetilde{C}_M(X)\Box M)(G/G) \otimes \widetilde{C}_M(S^{mV})(G/G))\dto \\
H_{k+m|V|}( (\widetilde{C}_M(X) \Box M \Box \widetilde{C}_M(S^{mV}))(G/G)),\\
\enddiagram}
which is isomorphic to $\widetilde{H}^G_{k+m|V|}(X\wedge S^{mV}, M)$, since 
$$(\widetilde{C}_M(X) \Box M \Box \widetilde{C}_M(S^{mV}))(G/G) \cong \widetilde{C}_M(X\wedge S^{mV}) \Box M (G/G).$$

\vspace{5mm}

\section{Mackey Functors and Duality}\label{DualitySection}
 
\vspace{5mm}

We can also use these constructions to treat the $G$-equivariant duality
between ordinary homology and cohomology on chain level.

\begin{lemma}
$Mackey_G$ is a closed category, i.e. there exists a functor
$$Hom_M : Mackey_{G}^{Op} \times Mackey_G \rightarrow Mackey_G$$
such that we have a natural isomorphism
$$Mackey_G( M \Box N, P) \cong Mackey_G(M, Hom_M(N, P)).$$
\end{lemma}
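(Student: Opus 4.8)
The plan is to construct $\mathrm{Hom}_M(N,P)$ by the standard ``pointwise'' recipe for closed structures on functor categories with a Day-convolution-type tensor product, then verify the adjunction. Since $M \Box N$ is the left Kan extension of $M\underline{\Box}N$ along the Cartesian product $\times \colon \mathscr{B}_G \times \mathscr{B}_G \to \mathscr{B}_G$, the box product is (a form of) Day convolution for the symmetric monoidal structure on $\mathscr{B}_G$ given by $\times$. The internal hom for Day convolution is well known: for Mackey functors $N,P$ set
$$\mathrm{Hom}_M(N,P)(S) := Mackey_G\bigl(N \Box F_S^{\mathscr{B}}, P\bigr),$$
where $F_S^{\mathscr{B}} = \mathscr{B}_G(S,-)$ is the Mackey functor represented by $S$ (note $\mathscr{B}_G$ is self-dual, so this is unambiguous). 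Functoriality of $S \mapsto \mathrm{Hom}_M(N,P)(S)$ in $\mathscr{B}_G$ comes from functoriality of $F_S^{\mathscr{B}}$, and additivity in $S$ follows because $F_{S \sqcup T}^{\mathscr{B}} \cong F_S^{\mathscr{B}} \oplus F_T^{\mathscr{B}}$ and $\Box$, being a left adjoint in each variable, preserves finite coproducts; hence $\mathrm{Hom}_M(N,P)$ is indeed an object of $Mackey_G$. Bifunctoriality in $N$ (contravariant) and $P$ (covariant) is clear from the formula.

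Next I would establish the adjunction. The key inputs are: (i) the Yoneda-type identity $M \Box F_S^{\mathscr{B}} \cong M(- \times S)$, i.e. the box product with a representable reduces to reindexing, which follows from the defining left-Kan-extension formula for $\Box$ together with the coend computation $\int^{(U,V)} M(U)\otimes N(V) \otimes \mathscr{B}_G(U\times V, -)$ and the co-Yoneda lemma applied in the $V$-variable; and (ii) every Mackey functor is canonically a colimit of representables $F_S^{\mathscr{B}}$ (the density/co-Yoneda statement in $Add(\mathscr{B}_G,Ab)$), with $\Box$ preserving these colimits in each variable. Granting these, one computes for Mackey functors $M,N,P$:
$$Mackey_G(M \Box N, P) \;\cong\; Mackey_G(M, \mathrm{Hom}_M(N,P))$$
first in the case $M = F_S^{\mathscr{B}}$ by the chain
$$Mackey_G(F_S^{\mathscr{B}} \Box N, P) \cong Mackey_G(N \Box F_S^{\mathscr{B}}, P) \cong \mathrm{Hom}_M(N,P)(S) \cong Mackey_G(F_S^{\mathscr{B}}, \mathrm{Hom}_M(N,P)),$$
using symmetry of $\Box$ for the first iso and the ordinary Yoneda lemma for the last; then one extends to general $M$ by writing $M$ as a colimit of representables and using that both sides send colimits in $M$ to limits. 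Naturality in all three variables is routine from naturality of the Yoneda and co-Yoneda isomorphisms.

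The main obstacle I anticipate is the bookkeeping around the group completion: the Cartesian product functor \rref{CartesianProductBurnside} and the hom-groups of $\mathscr{B}_G$ involve group-completing spans, so one must check that the coend formula for $\Box$, the identity $M \Box F_S^{\mathscr{B}} \cong M(-\times S)$, and the preservation of colimits all interact correctly with this group completion — equivalently, that $\mathscr{B}_G$ is genuinely a preadditive (indeed additive) symmetric monoidal category with $\times$ biadditive, so that the Day-convolution machinery over $Ab$-enriched categories applies verbatim. This is implicit in the paper's earlier remarks that \rref{UnderlinedBox} and \rref{CartesianProductBurnside} are biadditive, so once that is invoked the argument is formal; the residual care is just in phrasing the co-Yoneda lemma in the $Ab$-enriched (i.e. additive functor category) setting rather than the plain $Set$-enriched one.
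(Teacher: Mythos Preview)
Your argument is correct, but it is organized differently from the paper's. The paper proceeds by directly unwinding the universal property of the left Kan extension defining $\Box$: a morphism $M\Box N\to P$ is the same as a binatural family $M(S)\otimes N(T)\to P(S\times T)$, and currying in $Ab$ immediately yields the explicit formula
\[
Hom_M(N,P)(S)=\{\,(g_T:N(T)\to P(S\times T))_T \mid g_{T'}\circ N(\varphi)=P(S\times\varphi)\circ g_T\,\},
\]
after which functoriality in $S$ and the adjunction are read off from the same diagram. No representables, no density, no co-Yoneda.

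Your route is the standard Day-convolution recipe: set $Hom_M(N,P)(S)=Mackey_G(N\Box F_S^{\mathscr{B}},P)$, check the adjunction on representable $M$ via Yoneda, then extend using that every Mackey functor is an (enriched) colimit of representables and that $\Box$ preserves colimits in each slot. The two constructions agree, since unraveling $Mackey_G(N\Box F_S^{\mathscr{B}},P)$ with the Kan-extension universal property and Yoneda in the $F_S^{\mathscr{B}}$-variable lands exactly on the paper's ``end'' description $Nat(N,P(S\times-))$. What your approach buys is modularity and generality: it makes transparent that this is just the closed structure associated to Day convolution on $Add(\mathscr{B}_G,Ab)$, and the same template would work for any $Ab$-enriched symmetric monoidal source. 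What the paper's approach buys is economy: it avoids invoking the identity $N\Box F_S^{\mathscr{B}}\cong N(-\times S)$, density of representables, and colimit-preservation of $\Box$, and gives the explicit pointwise formula in one step, which is then reused verbatim in the next lemma on duality.
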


\begin{proof}
Let $M, N, P$ be Mackey functors.
Suppose we have a natural transformation
$$M\Box N \rightarrow P.$$
By definition, this consists of morphisms, for finite $G$-sets $S,T$,
$$M(S)\otimes N(T) \rightarrow P(S\times T)$$
with naturality diagrams, for $\mathscr{B}_G$-morphisms $f: S\rightarrow S'$ and $\varphi: T\rightarrow T'$,
of the form
\beg{DiagramBoxTimesDuality}{
\diagram
M(S) \otimes N(T) \rto \dto &P(S\times T)\dto\\
M(S')\otimes N(T')\rto & P(S' \times T').\\
\enddiagram
}
By adjunction, we can write this as
$$M(S) \rightarrow (T\mapsto Hom_{Ab}(N(T), P(S\times T)).$$
By diagram \rref{DiagramBoxTimesDuality}, the system $g_T\in Hom_{Ab} (N(T), P(S\times T))$ satisfies
$$g_{T'} \circ N(\varphi) = P(S\times \varphi) \circ g_T.$$
Thus we can define
$$Hom_M(N, P)(S)=\{ g_T: N(T) \rightarrow P(S\times T)\; \mid$$ 
$$\text{ for } \varphi:T\rightarrow T', \; g_{T'} \circ N(\varphi ) = P (S\times \varphi)\circ g_T\}$$
Also, from diagram \rref{DiagramBoxTimesDuality}, we get a diagram
$$
\diagram
M(S)\dto\rto & (T\mapsto Hom_{Ab}(N(T), P(S\times T)))\dto\\
M(S')\rto & (T'\mapsto Hom_{Ab}(N(T'), P(S'\times T'))).
\enddiagram
$$
Thus, $Hom_M(N,P)$ is made into a Mackey functor by
$$f(T\rightarrow g_T): T\mapsto P(f\times T) \circ g_T.$$
Clearly, these choices are forced and reversible, thus proving the natural isomorphism.

\end{proof}

\begin{lemma}
Suppose $X$ is a $G$-CW-complex, then
$$Hom_M(C_M(X),\mathscr{A}) \cong R^*(C^*_G(X)),$$
where $R^*$ is the right Kan extension from co-coefficient systems to Mackey functors.
\end{lemma}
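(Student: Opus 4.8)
The plan is to identify both sides of the claimed isomorphism with the same explicit Mackey functor, evaluated on $G/H$, and then check naturality in $G/H$ and in $X$. Recall that $C_M(X) = L\,C_G(X)$ where $L$ is the left Kan extension from coefficient systems to Mackey functors. The key observation is an adjunction chain: by the previous lemma, $Hom_M(C_M(X),\mathscr{A})$ co-represents the functor $N \mapsto Mackey_G(N \Box C_M(X),\mathscr{A})$. First I would unwind $N \Box C_M(X)$ using that $C_M(X) = L\,C_G(X)$ and that $L$ takes $\boxtimes$ to $\Box$ (Lemma \ref{BoxTimesVSBoxConnection}), which lets me reduce questions about $\Box$ with a Kan-extended argument to questions about $\boxtimes$ on coefficient systems, where things are concrete because $C_G(X)_n = F_{I_n}$ is a sum of representables.

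Next I would compute the right-hand side directly. The right Kan extension $R^*$ from co-coefficient systems (functors $\mathscr{O}_G \to Ab$) to Mackey functors, applied to $C^*_G(X)$, is by definition given on $G/H$ by an end/limit over the appropriate comma category; concretely $R^*(C^*_G(X))(G/H)$ should be the subobject of a product $\prod_{G/K} Hom_{Ab}(C_G(X)(G/K), \text{something})$ cut out by naturality, matching the defining formula for $Hom_M(N,P)(S)$ in the proof of the closed-category lemma with $N = C_M(X)$, $P = \mathscr{A}$. So the strategy is: write out $Hom_M(C_M(X),\mathscr{A})(G/H)$ using the explicit description $\{g_T : C_M(X)(T) \to \mathscr{A}(G/H \times T)\}$ subject to the stated compatibility, then substitute $C_M(X)(T) = (L\,C_G(X))(T)$ and use the universal property of $L$ to replace the indexing over all finite $G$-sets $T$ by indexing over orbits $G/K$, i.e. over $\mathscr{O}_G$. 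Using $\mathscr{A}(G/H \times G/K) \cong \Z\{G/H \times G/K \text{-orbits}\}$ and the self-duality of $\mathscr{B}_G$, I expect the compatibility conditions to exactly match those defining $R^*$ applied to $G/K \mapsto C^*_G(X)(G/K) = Hom(C_G(X)(G/K),\Z)$.

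The remaining step is to check the identification is natural in $G/H$ (i.e. respects both restriction and transfer, so it is a map of Mackey functors, not merely of coefficient systems) and compatible with the differentials so that it holds at the chain-complex level. Naturality in transfers is where the Burnside ring values of $\mathscr{A}$ and the double-coset bookkeeping enter; this is the step where one must be careful that the left Kan extension $L$ on the domain side and the right Kan extension $R^*$ on the target side "dualize" correctly — the pairing with $\mathscr{A}$ as unit is what converts $L$ into $R^*$, essentially because $Hom_M(-,\mathscr{A})$ is the duality on $Mackey_G$ and duality swaps the two Kan extensions along the (self-dual, by construction) Cartesian-product functor on $\mathscr{B}_G$.

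I expect the main obstacle to be precisely this last point: verifying that $Hom_M(L(-),\mathscr{A})$ agrees with $R^*(Hom(-,\Z))$ as functors on coefficient systems, i.e. that applying the internal hom into the unit interchanges the left Kan extension (from $\mathscr{O}_G$ to $\mathscr{B}_G$) with the right Kan extension (from $\mathscr{O}_G^{Op}$ to $\mathscr{B}_G$). The cleanest route is probably to prove this interchange first for representables $F_S$ (where $L(F_S)$ and the relevant Burnside-module computations are explicit, and $C_G(X)$ is built from these), then extend to $C_G(X)$ by additivity and by passing through the chain complex degreewise. Once the representable case is in hand, the general statement follows formally and the differential-compatibility is automatic from functoriality.
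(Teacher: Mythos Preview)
Your proposal is correct and, once you reach your final paragraph, matches the paper's proof almost exactly: the paper proves the interchange $Hom_M(LF_U,\mathscr{A})\cong R^*(Hom(F_U,\Z))$ for a representable $F_U$ by computing both sides as $\mathscr{A}(S\times U)$ (Yoneda on the left, the end formula plus the self-duality $\mathscr{A}(T\times U)\cong Hom_{Ab}(\mathscr{A}(T\times U),\Z)$ on the right), and then extends to $C_G(X)$ by additivity and a direct-limit argument for infinite cell sets. Your opening detour through the co-representability statement and Lemma~\ref{BoxTimesVSBoxConnection} is unnecessary---the paper never invokes the $\boxtimes\mapsto\Box$ compatibility here, going straight to the representable case via the explicit $Hom_M$ formula and the $L$-adjunction---but it does no harm, and you correctly identify the representable case as the crux.
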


\begin{proof}
We will prove
$$Hom_M(L F_U, \mathscr{A})(S) \cong R^*F_U^*$$
for a general finite $G$-set $U$, extend to the case of infinite $G$-sets, and then apply this to $U= I_n$, in which case
$$F_U =  C_G(X)_n.$$
We have, for a $G$-set $S$,
\beg{DualityLemma}{Hom_M(LF_U, \mathscr{A})(S):T\mapsto g_T \in Hom_{Ab}(F_U(T), \mathscr{A}(S\times T)),}
where for $\varphi\in \mathscr{O}_G^{Op}(T,T')$,
$$g_{T'} \circ F_U(\varphi) = \mathscr{A}(S\times \varphi) \circ g_T.$$

Also, $F_U(T) = \mathscr{O}_G(T, U)$.
So, \rref{DualityLemma} is uniquely determined by
$$g_U(Id_U: U\rightarrow U) \in \mathscr{A}(S\times U).$$
So,
$$Hom_M(LF_U, \mathscr{A})(S) = \mathscr{A}(S\times U).$$

On the other hand, we will also show 
$$R^* (Hom(F_U,\Z))(S)= \mathscr{A}(S\times U).$$
We have
$$Hom(F_U, \Z): T\mapsto Hom(\Z\mathscr{O}_G (T,U),\Z).$$
Denote $\Gamma:= R^* Hom (F_U, \Z )$.
Then we have, by definition, 
$$\Gamma = Hom_{\mathscr{O}_G}(\mathscr{B}, Hom_{Ab} (F_U, \Z).$$
Thus, $\Gamma(T)$ is given by morphisms
$$\mathscr{B}_G(T,T')\otimes\Z \mathscr{O}_G(T',U) \rightarrow \Z$$
subject to the usual identification coming from $\mathscr{O}_G$-functors of $T'$.
Therefore
$$\Gamma(T) = Hom_{Ab}(\mathscr{B}_G(T,U),\Z)= Hom_{Ab}(\mathscr{A}(T\times U),\Z).$$

The proof is concluded by noting that the representable Mackey functor
$T\mapsto \mathscr{B}_G(U,T)=\mathscr{B}_G(T,U)=\mathscr{A}(T\times U)$ is isomorphic to $T\mapsto Hom_{Ab}(\mathscr{B}_G(U,T), \Z).$
This proves the claim for $U$ finite. For $U$ infinite, $C^*$, $R^*$ turn direct colimits into limits.

\end{proof}

\vspace{5mm}

\section{Mackey Chains with Constant Coefficients}\label{AppendixMackeyChains}

\vspace{5mm}

Since a chain complex is a type of a ``stable object," one could ask to what extent one can simply use chain complexes of $\Z[G]$-modules
(and their fixed points under subgroups) to generate our Mackey functors, instead of the more complicated structures treated earlier in this paper.
The answer is that a chain complex of $\Z[G]$-modules captures, essentially, information with constant Mackey functor coefficients,
rather than general Mackey coefficients.
The purpose of this section is to discuss this point.

\vspace{3mm}

We begin by characterizing Mackey functors which are modules over the Green functor $\underline{\Z}$.
(In this case, the Mackey functor structure determines the module structure.)

Next, we observe that fixed points and cofixed points of a $\Z[G]$-module give rise to a $\underline{\Z}$-module Mackey functor.
However, not all examples arise in this way.

On the other hand, a derived statement along such lines is true.
More precisely, we describe a notion of an equivalence on chain complexes of $\Z[G]$-modules based on quasi-isomorphisms of $H$-fixed points
(called fp-{\em equivalence})
and prove that the corresponding derived category is equivalent to the derived category of the abelian category of Mackey $\underline{\Z}$-modules.

There is also a seemingly symmetrical notion of equivalence based on quasi-isomorphisms on $H$-cofixed points (called cfp-{\em equivalence}) and
one may ask if it coincides with the equivalence based on fixed points.
We prove that this is in fact true for $G$ cyclic, but give an example showing that it is false for $G=\Z/2\times \Z/2$.

\vspace{5mm}

Our classification of $\underline{\Z}$-Mackey modules is given by the following
\begin{proposition}
The category of Mackey functor $\underline{\Z}$-modules is equivalent to the full subcategory of $Mackey_G$ on Mackey functors $M$
such that for $f: G/H \rightarrow G/K \in Mor(\mathscr{O}_G)$,
\beg{ConditionForModule}{f_* f^* = \frac{|K|}{|H|}.}
\end{proposition}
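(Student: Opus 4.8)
The plan is to characterize the $\underline{\Z}$-module structure on a Mackey functor $M$ explicitly, then recognize that the existence of such a structure is governed exactly by condition \rref{ConditionForModule}. First I would recall that a $\underline{\Z}$-module structure on $M$ is a map of Mackey functors $\underline{\Z} \Box M \to M$ satisfying the usual unit and associativity axioms. Since $\underline{\Z}(G/H) = \Z$ for all $H$ with all restriction maps the identity and all transfer maps $G/H \to G/K$ equal to multiplication by the index $[K:H] = |K|/|H|$, the key observation is that the unit $\mathscr{A} \to \underline{\Z}$ is surjective, so a $\underline{\Z}$-module structure on $M$, if it exists, is unique and is simply the $\mathscr{A}$-module structure (which every Mackey functor carries canonically) factoring through $\underline{\Z}$. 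Thus the question reduces to: for which $M$ does the canonical $\mathscr{A}$-action factor through $\underline{\Z}$?

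Next I would unwind what that factorization means. The Burnside ring $\mathscr{A}(G/H)$ acts on $M(G/H)$; the kernel of $\mathscr{A}(G/H) \to \underline{\Z}(G/H) = \Z$ is generated (as $H$ ranges over subgroups, using the double-coset/Frobenius formulas) by elements of the form $[H/L] - |H/L|\cdot[H/H]$ for $L \subseteq H$, i.e.\ by differences between the class of a transitive $H$-set and its cardinality times the trivial one. The element $[H/L] \in \mathscr{A}(G/H)$ acts on $M(G/H)$, via the standard identification of the Burnside-ring action with composites of transfers and restrictions in the Mackey functor, precisely as $f_* f^*$ where $f: G/L \to G/H$ (up to the Mackey double-coset bookkeeping, which for a transitive orbit is a single term). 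So the $\mathscr{A}$-action factors through $\underline{\Z}$ if and only if $f_* f^* = |H/L| = |H|/|L|$ for every such $f$; renaming, this is exactly \rref{ConditionForModule} with the roles of source and target as stated. Conversely, given \rref{ConditionForModule}, these relations hold, the action kills the augmentation ideal fiberwise compatibly with restrictions and transfers, and hence descends to a well-defined $\underline{\Z}$-module structure; functoriality is automatic since everything is induced from the $\mathscr{A}$-module structure, so the equivalence of categories (identity on underlying Mackey functors) follows.

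The main obstacle I expect is the bookkeeping translating the abstract Burnside-ring action into the explicit formula $f_* f^*$ — specifically, verifying that the generators of $\ker(\mathscr{A}(G/H) \to \Z)$ act as $f_* f^* - |H|/|L|$ and that checking the vanishing on these generators for all subgroups $H$ is equivalent to checking \rref{ConditionForModule} for all orbit maps $f$. One must be careful that a general morphism in $\mathscr{O}_G$ is a projection $G/H \to G/K$ with $H$ subconjugate to $K$, and that $f^* f_*$ (the other composite) is handled by the Mackey double-coset formula and need not be controlled directly — only $f_* f^*$ enters. I would handle this by reducing to the universal case: $M = \mathscr{A}$ itself modulo the relations, observing that $\underline{\Z}$ is precisely the quotient of $\mathscr{A}$ by the Mackey-functor ideal generated by $(f_* f^* - |K|/|H|)$, which is a direct computation in the Burnside category, and then the proposition is the statement that modules over a quotient ring form the full subcategory of modules annihilated by the relevant ideal. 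Everything else is formal.
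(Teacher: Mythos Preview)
Your approach is correct but proceeds differently from the paper. The paper invokes directly the explicit characterization (from \cite{LiBoxProduct}, Lemma~21) of a $\underline{\Z}$-module structure on $M$ as three commutative diagrams encoding Frobenius reciprocity for the pairing $\underline{\Z}(G/H)\otimes M(G/H)\to M(G/H)$; since $\underline{\Z}(G/H)=\Z$ with identity restrictions, two of the three diagrams are tautological, and the remaining one (the projection formula for the transfer on the $\underline{\Z}$-factor) unwinds immediately to $f_*f^*=|K|/|H|$. Your route instead observes that $\underline{\Z}$ is a quotient Green functor of $\mathscr{A}$, so $\underline{\Z}$-modules are exactly the Mackey functors on which the canonical Burnside-ring action factors through the augmentation, and then identifies the generators of the augmentation kernel and their action as $f_*f^*-|K|/|H|$. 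Both arguments arrive at the same condition with little work; the paper's is shorter because it outsources the Frobenius-reciprocity description of Green-functor modules to a reference, while yours is more self-contained and also makes transparent (indeed proves) the parenthetical remark preceding the proposition that ``the Mackey functor structure determines the module structure,'' since any $\underline{\Z}$-module structure must be the one descended from $\mathscr{A}$.
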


\begin{proof}

By definition, $M$ is a $\underline{\Z}$-module when, for a map 
$$f: G/H \rightarrow G/K,$$ 
the diagrams
$$
\diagram
M(G/H) \otimes \Z \rto^(0.55)= & M(G/H)\\
M(G/K) \otimes \Z \uto^{f^* \otimes f^*}\rto_(0.55)= & M(G/K) \uto_{f^*}
\enddiagram
$$

$$
\diagram
 &M(G/H) \otimes \Z \rto^= & M(G/H)\ddto^{f_*} \\
M(G/H) \otimes \Z \urto^{Id \otimes f^*}\drto_{f_*\otimes Id} & & \\
 & M(G/K) \otimes \Z \rto_= &  M(G/K)
\enddiagram
$$

$$
\diagram
 &M(G/H) \otimes \Z \rto^= & M(G/H)\ddto^{f_*} \\
M(G/K) \otimes \Z \urto^{f^* \otimes Id}\drto_{Id \otimes f_*} & & \\
 & M(G/K) \otimes \Z \rto_= &  M(G/K)
\enddiagram
$$
commute (see \cite{LiBoxProduct}, Lemma 21).
The conditions that the first and second diagrams commute are trivial. 
The third diagram gives the condition that
$$(f_* f^*: M(G/H) \rightarrow M(G/K)) = \frac{|K|}{|H|}.$$

\end{proof}

\vspace{5mm}

To describe the relation with $\Z[G]$-modules, note that for a $\Z[G]$-module $N$,
\beg{ExampleFixedPoints}{G/H \mapsto N^H}
\beg{ExampleDual}{G/H \mapsto N_H= N\otimes_{\Z[G]} \Z}
are Mackey functors satisfying \rref{ConditionForModule} where for \rref{ExampleFixedPoints}, restrictions are given by inclusions
and corestrictions are given by summing over cosets, and for \rref{ExampleDual}, vice versa. Thus, they give examples of $\underline{\Z}$-modules.
If $N=\Z$ (with trivial $G$-action), \rref{ExampleFixedPoints} gives $\underline{\Z}$ and \rref{ExampleDual} gives its dual.
This also shows that neither \rref{ExampleFixedPoints} nor \rref{ExampleDual} give all $\underline{\Z}$-modules.

\vspace{5mm}

On the other hand, we have the following
\begin{proposition}\label{MackeyAbelianConnection}
The functor $\Xi_{G/H}$ from the category of abelian groups to the category of Mackey functors defined by, for an object $A \in Ab$
and a subgroup $H\subseteq G$,
$$\Xi_{G/H}: A \mapsto (G/K \mapsto ( A((G/H)^K):= A\otimes \Z ((G/H)^K))),$$
(where restrictions are defined by restrictions of fixed points, and co-restrictions are given by sums over representatives of cosets)
is a universal $\underline{\Z}$-module on $F_{G/H}\otimes A$. Thus, $\Xi_{G/H}$ is left adjoint to evaluation of a $\underline{\Z}$-module
Mackey functor at $G/H$.

\end{proposition}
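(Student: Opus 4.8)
The plan is to prove that $\Xi_{G/H}(A)$, together with a natural map $\eta\colon F_{G/H}\otimes A\to\Xi_{G/H}(A)$ of underlying coefficient systems, is \emph{initial} among maps from $F_{G/H}\otimes A$ into a $\underline{\Z}$-module: for every $\underline{\Z}$-module Mackey functor $M$, every map of coefficient systems $F_{G/H}\otimes A\to M$ factors uniquely as $\eta$ followed by the (underlying coefficient-system map of a) morphism of Mackey functors $\Xi_{G/H}(A)\to M$. This is exactly the statement that $\Xi_{G/H}(A)$ is the universal $\underline{\Z}$-module on $F_{G/H}\otimes A$. Composing this universal property with the Yoneda-type adjunction of Section \ref{CoefficientSystemLevel}, under which maps of coefficient systems $F_{G/H}\otimes A\to\Phi$ correspond naturally to homomorphisms $A\to\Phi(G/H)$, then produces a natural isomorphism $Mackey_G(\Xi_{G/H}(A),M)\cong Ab(A,M(G/H))$ for $\underline{\Z}$-modules $M$. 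Since, by the preceding characterization of $\underline{\Z}$-module Mackey functors, being a $\underline{\Z}$-module is a \emph{property} of a Mackey functor, a morphism of $\underline{\Z}$-modules is just a morphism of the underlying Mackey functors, and so this is the asserted adjunction between $\Xi_{G/H}$ and evaluation at $G/H$.

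First I would read $\Xi_{G/H}(A)$ as the fixed-point Mackey functor of the $\Z[G]$-module $N:=A\otimes\Z[G/H]$, so that $\Xi_{G/H}(A)(G/K)=N^K$ is the subgroup of $K$-invariants, with restrictions the inclusions of invariant subgroups and corestrictions the sums over coset representatives, as described. This is the case $N=A\otimes\Z[G/H]$ of construction \rref{ExampleFixedPoints}, hence a $\underline{\Z}$-module; alternatively, condition \rref{ConditionForModule} is checked directly, since for a morphism $f\colon G/J\to G/K$ of $\mathscr{O}_G$ with $J\subseteq K$ the composite $f_*f^*$ first includes a $K$-invariant vector into $N^J$ and then adds up its $|K|/|J|$ translates under coset representatives of $J$ in $K$, hence is multiplication by $|K|/|J|$. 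The map $\eta$ is the evident one: at $G/K$ it sends a point of $(G/H)^K$ to the corresponding permutation basis vector of $N^K$ (which is $K$-fixed); on the tautological generator $Id_{G/H}\in F_{G/H}(G/H)$ it picks out the basis vector $e_H$ indexed by the trivial coset.

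The heart of the proof is the universal property. Fix a $\underline{\Z}$-module $M$. By the Yoneda argument of Section \ref{CoefficientSystemLevel}, a map of coefficient systems $F_{G/H}\otimes A\to M$ is the same datum as a homomorphism $h\colon A\to M(G/H)$, and I would show that $h$ determines a unique morphism $\psi\colon\Xi_{G/H}(A)\to M$ of Mackey functors with $\psi\circ\eta$ recovering $h$. The structural input is a presentation of $\Xi_{G/H}(A)$: the abelian group $N^K$ is generated by the transfers $\mathrm{tr}^K_J(a\cdot e_{gH})$, where $gH$ runs over $G/H$, $a\in A$, and $J\subseteq K$ is any subgroup fixing the coset $gH$, so that $a\cdot e_{gH}=c_g(a\cdot e_H)$ lies in $N^J$ ($c_g$ being the conjugation isomorphism of the Mackey structure). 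Any such $\psi$ is then forced to satisfy $\psi_{G/K}\bigl(\mathrm{tr}^K_J(a\cdot e_{gH})\bigr)=\mathrm{tr}^K_J\,\mathrm{res}^{gHg^{-1}}_J\bigl(c_g(h(a))\bigr)$, which gives uniqueness; for existence one takes this as a definition and checks that it respects every relation among the generators $\mathrm{tr}^K_J(a\cdot e_{gH})$ of $N^K$. These relations are generated by $\Z$-linearity in $a$, by conjugation relations, by the Mackey double-coset formula (rewriting $\mathrm{res}\circ\mathrm{tr}$ as a sum of terms $\mathrm{tr}\circ c\circ\mathrm{res}$ over double cosets), and --- crucially --- by the identities $\mathrm{tr}^{J'}_J\mathrm{res}^{J'}_J(v)=(|J'|/|J|)\cdot v$ for $v\in N^{J'}$, which arise whenever one element of $N^K$ is presented using two nested fixing subgroups $J\subseteq J'$. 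The first three families of relations hold in $M$ because $M$ is a Mackey functor; the fourth holds in $M$ precisely because $M$ satisfies \rref{ConditionForModule}. Hence $\psi$ is well defined, and compatibility with restrictions, transfers, and conjugations follows from the defining formula together with the same double-coset identities; naturality of $h\mapsto\psi$ in $A$ and in $M$ is routine.

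I expect the well-definedness step to be the main obstacle: isolating exactly which relations hold among the generators $\mathrm{tr}^K_J(a\cdot e_{gH})$ of the fixed-point Mackey functor $N^{(-)}$, and then confirming that condition \rref{ConditionForModule}, and nothing stronger, is what makes the assignment descend. The conceptual content packaged by the explicit formula for $\Xi_{G/H}$ is that the universal $\underline{\Z}$-module on $F_{G/H}\otimes A$ is computed by the permutation-module fixed points $(A\otimes\Z[G/H])^{(-)}$, and not by the representable Mackey functor obtained from the unrestricted left Kan extension $L$ of Section \ref{MackeyFunctorSection}; the latter is generally not a $\underline{\Z}$-module --- for $H=G$ it is the Burnside Green functor $\mathscr{A}$, which fails \rref{ConditionForModule}.
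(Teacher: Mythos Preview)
Your proposal is correct and, at bottom, rests on the same identification as the paper, but the packaging is different enough to be worth noting. The paper argues that the universal $\underline{\Z}$-module on $F_{G/H}$ is, by definition, the quotient $Q_{G/H}$ of the representable Mackey functor $G/K\mapsto\mathscr{B}(G/K,G/H)$ by the relation \rref{ConditionForModule}, and then identifies that quotient with $\Xi_{G/H}(\Z)$: the abelian group $(\Z[G/H])^K$ is free on exactly the spans $G/H\leftarrow G/(gHg^{-1}\cap K)\rightarrow G/K$ over double cosets $K\backslash G/H$, every span $G/H\leftarrow G/J\rightarrow G/K$ in $\mathscr{B}(G/K,G/H)$ factors through such a maximal one and is identified with an integer multiple of it under \rref{ConditionForModule}, and $\Xi_{G/H}$ is already a $\underline{\Z}$-module so no further collapse occurs. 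Your generators $\mathrm{tr}^K_J(a\cdot e_{gH})$ are precisely these spans written in transfer language, and your ``nested fixing subgroups'' relation is exactly the collapse coming from \rref{ConditionForModule}. The advantage of the paper's formulation is that it spares you the explicit verification that $\psi$ respects restrictions, transfers, and conjugations: once one works with a quotient of a Mackey functor (rather than building $\psi$ by hand on a presentation), the Mackey structure is inherited automatically, and the only thing to check is the size of the quotient.
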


\begin{proof}

To simplify notation, we will just treat the case of $A= \Z$ (the general case is analogous).
We need to show that $\Xi_{G/H} (\Z)$ is the quotient $Q_{G/H}$ of the representable Mackey functor
$$G/K \mapsto \mathscr{B} (G/K, G/H)$$
modulo the relation \rref{ConditionForModule}.

To this end, note that $(\Z(G/H))^K$ is freely generated, as an abelian group, by
$$\sum_{\gamma\in K/(gHg^{-1})\cap K} 1\cdot (\gamma g H)$$
where $g$ runs through representatives of double cosets $K\backslash  G / H$.
Thus $\Z [G/H]^K$ is the free abelian group on generators of the form
\beg{LeftAdjointsLemmaDiagram}{
\diagram
 & G/(gHg^{-1} \cap K) \drto \dlto& \\
G/H & & G/K, \\
\enddiagram
}
or
\beg{FixedPointsAreFreeAbelianGroup}{\Xi_{G/H} (G/K) = (\Z(G/H))^K  = \Z \{ G/H \leftarrow G/(gHg^{-1} \cap K) \rightarrow G/K\}.}

On the other hand,
$$\mathscr{B}(G/K, G/H) = \Z \{ G/H \leftarrow G/J \rightarrow G/K | J\subseteq G/gHg^{-1} \cap K\}.$$
This can be written as
$$
\diagram
 & & S\dlto_q \drto^q & & \\
 & G/ gHg^{-1} \cap K \dlto& & G/ gHg^{-1} \cap K\drto & \\
G/H & & & & G/K \\
\enddiagram
$$
for some unique $q$.

By \rref{ConditionForModule}, this is identified with a multiple of \rref{LeftAdjointsLemmaDiagram}.
Thus $Q_{G/H}$ is a quotient of $\Xi_{G/H} (G/K)$ (see \rref{FixedPointsAreFreeAbelianGroup}).
On the other hand, $\Xi_{G/H}$ is a $\underline{\Z}$-module by the previous comment, and thus, the quotient map is the identity.

\end{proof}

\begin{corollary}\label{CorollaryUnivComplex}
For a $G$-CW-complex $X$,
$$C_{\underline{\Z}} : G/K \mapsto (C(X))^K$$
is the universal complex of $\underline{\Z}$-modules on the complex of coefficient systems $C_G (X)$.
\end{corollary}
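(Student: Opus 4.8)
The plan is to realise $C_{\underline{\Z}}$ as the result of applying, in each homological degree, the ``universal $\underline{\Z}$-module'' functor $U$ to the complex of coefficient systems $C_G(X)$. Here $U$ is the left adjoint of the (limit-preserving) forgetful functor from $\underline{\Z}$-module Mackey functors to coefficient systems; on representables it is computed by Proposition \ref{MackeyAbelianConnection}, namely $U(F_{G/H}) = \Xi_{G/H}(\Z)$, and since $\underline{\Z}$-modules form an abelian category the degreewise extension of $U$ to chain complexes is again left adjoint to the degreewise forgetful functor. Thus ``the universal complex of $\underline{\Z}$-modules on $C_G(X)$'' means precisely $U$ applied in each degree, and it suffices to produce a natural isomorphism of complexes of Mackey functors $U(C_G(X)) \cong C_{\underline{\Z}}(X)$.

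First I would carry this out degreewise. Fix $n$ and decompose the $G$-set $I_n$ of $n$-cells of $X$ into orbits $I_n = \coprod_i G/H_i$. By the identity $C_G(X)_n = F_{I_n}$ recalled in the proof of Lemma \ref{CoefficientSystemBoxtimesConnection}, and since $F$ sends disjoint unions of $G$-sets to direct sums of coefficient systems, we get $C_G(X)_n = \bigoplus_i F_{G/H_i}$. As $U$ is a left adjoint it commutes with arbitrary direct sums, so
$$U(C_G(X)_n) = \bigoplus_i U(F_{G/H_i}) = \bigoplus_i \Xi_{G/H_i}(\Z).$$
Evaluating at an orbit $G/K$ and using \rref{FixedPointsAreFreeAbelianGroup} together with the fact that $K$-fixed points commute with direct sums gives
$$U(C_G(X)_n)(G/K) = \bigoplus_i (\Z(G/H_i))^K = (\Z I_n)^K = (C_n(X))^K,$$
and the description of the restrictions and co-restrictions of $\Xi_{G/H_i}$ in Proposition \ref{MackeyAbelianConnection} (inclusions and transfers of genuine fixed points) shows these identifications are compatible with the Mackey structure maps. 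Hence degreewise $U(C_G(X)) \cong (C(X))^{(-)}$ as Mackey functors.

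Next I would check that $U$ carries the boundary $\partial\colon C_G(X)_n\to C_G(X)_{n-1}$ to the boundary induced by the cellular boundary of $X$ on fixed points. By the universal property, $U(\partial)$ is the unique morphism of $\underline{\Z}$-modules whose precomposition with the unit $F_{I_n}\to U(F_{I_n})$ equals $\partial$ followed by the unit $F_{I_{n-1}}\to U(F_{I_{n-1}})$; concretely, on each orbit $G/K$ the unit is the natural inclusion $C_n(X^K)=\Z((I_n)^K)\hookrightarrow (\Z I_n)^K=(C_n X)^K$. On the other hand, for each subgroup $K$ the cellular chain complex $C_*(X^K)$ of the fixed-point CW-complex is exactly the subcomplex $\Z((I_\bullet)^K)$ of the genuine fixed-point complex $(C_*(X))^K$, and the two boundary maps restrict from one another because $X^K\hookrightarrow X$ is a cellular inclusion and the cellular boundary of $X$ is $\Z[G]$-linear. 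Therefore the genuine fixed-point boundary $\partial^{(-)}$ is a map of $\underline{\Z}$-modules satisfying the same compatibility with the units as $U(\partial)$, so by uniqueness $U(\partial)=\partial^{(-)}$. Assembling the degreewise isomorphisms identifies $U(C_G(X))$ with $C_{\underline{\Z}}\colon G/K\mapsto (C(X))^K$.

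The two explicit computations are routine assembly of results already in Section \ref{AppendixMackeyChains}; I expect the only real point to be the last step, i.e.\ seeing that applying $U$ to the \emph{coefficient-system} boundary map — which a priori only records honest $G$-equivariant incidences between cells — reproduces the genuine cellular boundary on fixed points. This is forced by playing the universal property of $U$ against the elementary fact that $C_*(X^K)\hookrightarrow (C_*(X))^K$ is a chain map.
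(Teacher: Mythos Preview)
Your argument is correct and follows the same route as the paper: decompose the cell set $I_n$ into orbits, apply Proposition~\ref{MackeyAbelianConnection} to each orbit summand with $A=\Z$, and take the direct sum. The paper's one-line proof omits any discussion of the differentials, so your additional verification via the universal property of $U$ is a welcome supplement rather than a deviation.
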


\begin{proof}
Apply Proposition \ref{MackeyAbelianConnection} to the orbit summands of the set $I_n$ of $n$-cells (and $A= \Z$), and take direct sum.
\end{proof}

\noindent {\bf Comment:}

\noindent Thus, $C_{\underline{\Z}} (X)$ corresponds to the $G$-equivariant spectrum $X_+ \wedge H\underline{\Z}$.

\vspace{5mm}

Proposition \ref{MackeyAbelianConnection} and Corollary \ref{CorollaryUnivComplex}
can be used to show that a certain derived category of the category of chain complexes of
$\Z[G]$-modules is equivalence to the derived category of $\underline{\Z}$-Mackey modules. 
Denote by $\Z[G]\text{-Chain}$ the category of chain complexes of $\Z[G]$-modules.

\begin{definition}
A morphism $f:C\rightarrow D$ in $\Z[G]\text{-Chain}$ is called an {\em fp}-equivalence if for every subgroup $H\subseteq G$, the map induced on fixed points $f^H : C^H\rightarrow D^H$ is a quasi-isomorphism (i.e., induces an isomorphism in chain homology). Symmetrically, it is called
a {\em cfp}-equivalence if for every subgroup $H\subseteq G$, the map on cofixed points
$f_H :C_H \rightarrow D_H$
is a quasi-isomorphism.

\end{definition}

By Corollary \ref{CorollaryUnivComplex}, the homotopy category $h\Z[G]\text{-Chain}$ has colocalization with respect to fp-equivalences by cell chain complexes of $\Z[G]$-modules (see \cite{AGBook}, Section 5.2) where a cell chain complex of $\Z[G]$-modules $C$ is defined to be of the form
$$C=colim C_{(n)}$$
for some
$$C_{(-1)} \rightarrow C_{(0)} \rightarrow C_{(1)}\rightarrow \dots$$
where $C_{(-1)}=0$, and $C_{(n+1)}$ is the mapping cone of a chain map
$$P_{(n)} \rightarrow C_{(n)}$$
where $P_{(n)}$ is a direct sum of chain complexes of $\Z[G]$-modules of the form $\Z[G/H_i][n_i]$ and has 0 differential.
Thus we have proved
\begin{proposition}
The derived category of $\Z[G]\text{-Chain}$ with respect to fp-equivalences exists.
\end{proposition}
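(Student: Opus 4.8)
The plan is to invoke the general machinery of colocalization (Bousfield-style localization of model/triangulated categories) as developed in the reference \cite{AGBook}, Section 5.2, reducing the existence of the derived category to the existence of a suitable colocalizing subcategory of cell chain complexes. First I would recall the abstract criterion: given a homotopy category $h\Z[G]\text{-Chain}$ together with a class of weak equivalences (here the fp-equivalences), the derived category (i.e. the localization at these equivalences) exists provided there is a set of objects generating a colocalization, that is, for every chain complex $C$ there is a cell chain complex $\widehat{C}$ and an fp-equivalence $\widehat{C}\rightarrow C$ which is terminal among maps from cell objects up to fp-equivalence. The content of the statement is thus: (i) the fp-equivalences form a reasonable class (closed under composition, retracts, satisfying $2$-out-of-$3$), and (ii) cell chain complexes built from the generators $\Z[G/H_i][n_i]$ detect and realize fp-equivalences.

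The key steps, in order, are as follows. Step 1: verify that fp-equivalences satisfy the axioms needed to be the weak equivalences of a localization — this is immediate since $(-)^H$ is an additive (exact on the relevant level, as it is a finite limit) functor and quasi-isomorphisms of ordinary chain complexes have all the required closure properties, so fp-equivalences inherit them. Step 2: identify the generators. By Corollary \ref{CorollaryUnivComplex}, the functor $C\mapsto (G/K\mapsto C^K)$ is the universal $\underline{\Z}$-module complex on $C_G(X)$, and more basically $\Z[G/H]^K = \Z\,Map_G(G/K,G/H)$ computes the fixed points of the free modules; hence the complexes $\Z[G/H][n]$ corepresent, up to the fixed-point functor, the $n$-th homology of the $H$-fixed points. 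This makes them the correct ``cells.'' Step 3: construct, for an arbitrary $C$, the cell approximation $\widehat{C}\rightarrow C$ by the standard transfinite small-object/cell-attachment argument: inductively attach copies of $\Z[G/H_i][n_i]$ (with zero differential, bundled into $P_{(n)}$) to kill and create homology classes on all fixed points simultaneously, forming $C_{(n+1)}$ as the mapping cone of $P_{(n)}\rightarrow C_{(n)}$ and taking $\widehat{C}=\operatorname{colim} C_{(n)}$. Step 4: check that the resulting map is an fp-equivalence and that the cell complexes are ``small'' enough (the relevant compactness: $(\Z[G/H][n])^K$ is finitely generated free, so maps out of it commute with the filtered colimits appearing), which is exactly the input needed for \cite{AGBook}, Section 5.2 to apply and yield the colocalization, hence the derived category.

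The main obstacle I expect is Step 3–4: showing that a single cell chain complex can simultaneously resolve the fixed points $C^K$ for \emph{all} subgroups $K\subseteq G$ at once, rather than one subgroup at a time. The subtlety is that attaching a cell $\Z[G/H][n]$ affects $C^K$ for every $K$ (through $\Z[G/H]^K$, which by \rref{FixedPointsAreFreeAbelianGroup} is a sum over double cosets), so the inductive bookkeeping must ensure that the homology of each fixed-point complex converges to that of $C^K$ without the cells introduced to fix one subgroup disturbing the already-corrected homology at another. This is handled by the usual device of attaching cells in order of increasing homological degree and, at each stage, choosing $P_{(n)}$ large enough to surject onto a generating set of $n$-cycles of every $C_{(n-1)}^K$ while being supported in degrees $\geq n$, so that lower-degree homology is untouched; the finite-generation of $\Z[G/H]^K$ guarantees the colimit is exhausted. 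Once this convergence is in place, the abstract colocalization theorem of \cite{AGBook} gives the conclusion with no further work, since its hypotheses (a set of compact generators and their cell complexes realizing the equivalences) are precisely what Steps 2–4 supply.
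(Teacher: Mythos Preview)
Your proposal is correct and follows essentially the same approach as the paper: invoke the colocalization machinery of \cite{AGBook}, Section~5.2, using cell chain complexes built from the generators $\Z[G/H_i][n_i]$, with Corollary~\ref{CorollaryUnivComplex} supplying the link between fixed points and these generators. The paper's own proof is in fact the single sentence preceding the proposition together with the \qed, so your Steps~1--4 simply unpack what the paper leaves to the cited reference.
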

\qed

\begin{proposition}
The derived category $D_{fp}(G)$ of $\Z[G]\text{-Chain}$ with respect to fp-equivalences is equivalent to the
derived category $D\underline{\Z}\text{-Mod}$ of the abelian category $\underline{\Z}\text{-Mod}$ of Mackey modules over the constant Green functor
$\underline{\Z}$.
\end{proposition}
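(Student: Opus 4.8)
The plan is to construct an adjoint pair between $\Z[G]\text{-Chain}$ and the category $Ch(\underline{\Z}\text{-Mod})$ of chain complexes of Mackey $\underline{\Z}$-modules, show it is a Quillen-type equivalence with respect to the relevant model/derived structures, and then pass to homotopy categories. Concretely, extend the functor $\Xi_{G/G}$ from Proposition \ref{MackeyAbelianConnection} (the case $H=G$, i.e. $N \mapsto (G/K \mapsto N^K)$ on the nose) levelwise to chain complexes to get a functor $\Phi : \Z[G]\text{-Chain} \rightarrow Ch(\underline{\Z}\text{-Mod})$; its left adjoint $\Psi$ is evaluation at $G/G$ composed with... wait, one must be careful about variance. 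The functor $N \mapsto (G/K \mapsto N^K)$ is right adjoint to evaluation at $G/G$ on the level of $\underline{\Z}$-modules (by the universal property of $\Xi$, suitably dualized, or directly: $\mathrm{Hom}_{\Z[G]}(\mathrm{ev}_{G/G} M, N) \cong \mathrm{Hom}_{\underline{\Z}\text{-Mod}}(M, \Xi_{G/G} N)$, using that a $\underline{\Z}$-module is determined by its restriction/corestriction data which for the fixed-point functor is forced). So $\mathrm{ev}_{G/G} : \underline{\Z}\text{-Mod} \rightarrow \Z[G]\text{-Mod}$ is left adjoint to $\Xi_{G/G}$, and both extend levelwise to chain complexes.

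The key steps, in order: (1) Set up the adjunction $(\mathrm{ev}_{G/G}, \Xi_{G/G})$ on chain complexes, observing $\Xi_{G/G}$ is exact (fixed points of a $\Z[G]$-module complex are exact in $N$ as abelian-group-level operations, being a limit over a diagram that is a product over representatives), hence preserves quasi-isomorphisms and in particular descends to derived categories. (2) Identify the cell chain complexes of $\Z[G]$-modules from the paragraph preceding the statement as a set of generators adapted to fp-equivalences: for a cell complex built from $\Z[G/H_i][n_i]$, the functor $\Phi = \Xi_{G/G}$ sends $\Z[G/H]$ to the representable $\underline{\Z}$-module summand coming from $\Xi_{G/H}(\Z)$, i.e. precisely $\Xi_{G/H}(\Z)$ of Proposition \ref{MackeyAbelianConnection}, because $(\Z[G/H])^K = \Z((G/H)^K)$; by Corollary \ref{CorollaryUnivComplex} this is the universal $\underline{\Z}$-module generated by $F_{G/H}$. (3) Show that the $\Xi_{G/H}(\Z)$, as $H$ ranges over conjugacy classes of subgroups, together with shifts, generate $D\underline{\Z}\text{-Mod}$ (compact generators of the derived category of the abelian category $\underline{\Z}\text{-Mod}$), e.g. because every $\underline{\Z}$-module is a quotient of a direct sum of such representables — this follows from the description of $\underline{\Z}\text{-Mod}$ as a localization of $Mackey_G$ by relation \rref{ConditionForModule} in the Proposition characterizing $\underline{\Z}$-modules. (4) Show $\mathrm{ev}_{G/G}$ sends the generating $\underline{\Z}$-module $\Xi_{G/H}(\Z)$ back to $\Z[G/H]$ (indeed $\Xi_{G/H}(\Z)(G/G) = \Z((G/H)^G) = \Z$ if $H=G$... hmm, this needs $(G/H)^G$ which is empty unless $H=G$; rather the right statement is $\mathrm{ev}_{G/G}$ applied to the image under $\Phi$ of $\Z[G/H]$ recovers $\Z[G/H]$ up to fp-equivalence via the counit). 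The unit $N \rightarrow \mathrm{ev}_{G/G}\Xi_{G/G} N = (N^G)$... no — I realize the correct equivalence functor is not a single evaluation; rather, one checks that the derived unit and counit of $(\mathrm{ev}_{G/G}, \Xi_{G/G})$ are fp-equivalences / quasi-isomorphisms on the subcategory of cell objects, using that both sides are generated (as triangulated categories with coproducts) by the matching sets of compact generators $\{\Z[G/H][n]\}$ and $\{\Xi_{G/H}(\Z)[n]\}$, and that the functors match these generators and induce isomorphisms on the graded Hom-groups between them. The latter Hom-computation reduces, via Corollary \ref{CorollaryUnivComplex} and the colocalization statement, to: $\mathrm{Hom}_{D_{fp}(G)}(\Z[G/H], \Z[G/K][n])$ computed by fixed points of a cell replacement equals $\mathrm{Hom}_{D\underline{\Z}\text{-Mod}}(\Xi_{G/H}(\Z), \Xi_{G/K}(\Z)[n])$, and both equal $\mathscr{A}(G/H\times G/K)$-derived-type expressions, matching by the adjunction of Proposition \ref{MackeyAbelianConnection}.

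Assembling: by step (1) the derived functor $R\Xi_{G/G}$ exists (it equals $\Xi_{G/G}$, being exact); by the colocalization paragraph the left derived functor $L\,\mathrm{ev}_{G/G}$ on $D_{fp}(G)$ is computed on cell complexes where $\mathrm{ev}_{G/G}$ is already homotopically meaningful; by steps (2)–(4) the adjunction $(L\,\mathrm{ev}_{G/G}, \Xi_{G/G})$ restricts to an adjoint equivalence on the generating objects and their coproducts/cones, hence — invoking a standard "generators and Hom-sets" recognition theorem for triangulated equivalences (e.g. as in \cite{AGBook}, Section 5.2 style arguments, or the compactly-generated Morita argument) — is an equivalence $D_{fp}(G) \simeq D\underline{\Z}\text{-Mod}$.

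The main obstacle I expect is step (3)–(4): pinning down that the cell chain complexes of $\Z[G]$-modules introduced before the statement really do give a set of compact generators for $D_{fp}(G)$ adapted to the fp-equivalences (this is where the colocalization result is used, but one must verify the cellular objects detect fp-equivalences, i.e. a map is an fp-equivalence iff it induces isomorphisms on $[\Z[G/H][n], -]$ in the homotopy category), and dually that $\{\Xi_{G/H}(\Z)[n]\}$ generate $D\underline{\Z}\text{-Mod}$ with matching $\mathrm{Ext}$-groups. Verifying the $\mathrm{Ext}$-group match amounts to the computation $\mathrm{Ext}^*_{\underline{\Z}\text{-Mod}}(\Xi_{G/H}\Z, \Xi_{G/K}\Z) \cong H^{-*}((\Z[G/K])^{\bullet H})$ for a projective-type resolution, which should follow formally from Proposition \ref{MackeyAbelianConnection} (adjunction turns $\mathrm{Ext}$ out of $\Xi_{G/H}\Z$ into cohomology of the evaluated complex) but requires care that $\Xi_{G/H}\Z$ is suitably projective in $\underline{\Z}\text{-Mod}$, or else that one works with a model-category presentation on both sides so that these representables are cofibrant. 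This is the technical heart; the rest is formal adjunction bookkeeping.
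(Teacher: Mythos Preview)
Your overall strategy---match compact generators on both sides via the fixed-point functor and invoke a Morita-type recognition theorem---is essentially the paper's argument, but the paper phrases it much more tersely: by the colocalization paragraph and Corollary~\ref{CorollaryUnivComplex}, the fixed-point functor embeds $D_{fp}(G)$ as a full subcategory of $D\underline{\Z}\text{-Mod}$, and essential surjectivity follows because every $\underline{\Z}$-module has a projective resolution by the $\Xi_{G/H}(\Z)$ (Proposition~\ref{MackeyAbelianConnection}). Your elaboration via an explicit adjoint pair and Hom-set computations is not wrong in spirit, but it is more machinery than is needed once one observes that ``full subcategory by construction'' already encodes the Hom-matching.

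That said, two genuine errors in your write-up would need fixing before this could stand as a proof. First, the functor $N\mapsto (G/K\mapsto N^{K})$ from $\Z[G]\text{-Mod}$ to $\underline{\Z}\text{-Mod}$ is \emph{not} $\Xi_{G/G}$ of Proposition~\ref{MackeyAbelianConnection}: that proposition defines $\Xi_{G/H}$ on abelian groups, and $\Xi_{G/G}(A)$ is the constant Mackey functor $G/K\mapsto A$, since $(G/G)^{K}$ is a point. The relevant adjoint to the fixed-point functor is evaluation at $G/e$ (whose Weyl group is $G$, so $M(G/e)$ is a $\Z[G]$-module), not $\mathrm{ev}_{G/G}$; you noticed something was off when you computed $(G/H)^{G}$, and this is why. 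Second, your step~(1) claims the fixed-point functor is exact, but it is only left exact: for example, $\Z[G]\twoheadrightarrow\Z$ becomes $\Z\{N_G\}\rightarrow\Z$ on $G$-fixed points, with image $|G|\Z$. What is true---and sufficient---is that the fixed-point functor takes fp-equivalences to quasi-isomorphisms \emph{by the definition of fp-equivalence}, and that on cell chain complexes it lands in complexes of projectives (by Corollary~\ref{CorollaryUnivComplex}), so the passage to derived categories is justified that way rather than via exactness.
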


\begin{proof}
By construction, $D_{fg}(G)$ is a full subcategory of $D\underline{\Z}\text{-Mod}$.
The fully faithful functor is onto on isomorphism classes because every $D\underline{\Z}$-module
has a resolution in $D_{fp}(G)$ by Proposition \ref{MackeyAbelianConnection}.
\end{proof}

It is easy to see examples where a quasi-isomorphism of chain complex of $\Z[G]$-modules is not an fp-equivalence or a cfp-equivalence
(e.g., a free $\Z[\Z/2]$-resolution of $\Z$).

Because of the apparent symmetry, one can ask if there is a relationship between fp-equivalence and cfp-equivalence.
By the following proposition, they are actually the same when $G$ is cyclic.

\vspace{5mm}

\begin{proposition}
For a finite cyclic group $G$, a chain map of $\mathbb{Z}[G]$-modules is a quasi-isomorphism after taking $H$-fixed points for all subgroups
$H\subseteq G$ if and only if it is a quasi-isomorphism after taking $H$-cofixed points for all subgroups $H\subseteq G$.
\end{proposition}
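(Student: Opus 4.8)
The plan is to reduce the statement to a relationship between $H$-fixed points and $H$-cofixed points of individual $\Z[G]$-modules that holds rationally, and to handle the integral torsion separately using the cyclic structure. First I would recall that for $G$ cyclic of order $n$ and $H \subseteq G$ the unique subgroup of index $d$ (so $d \mid n$), the fixed-point functor $N \mapsto N^H$ and the cofixed-point functor $N \mapsto N_H$ are related by the norm map $\mathrm{Nm}_H : N_H \to N^H$, given by summing over a set of coset representatives of $H$ in... more precisely, over $G/H$ acting — here I would set up the norm element $\nu_H = \sum_{g \in G/H} g \in \Z[G]$ and observe that $\mathrm{Nm}_H$ factors the multiplication-by-$\nu_H$ map $N \to N$. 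The key classical fact (Tate cohomology of cyclic groups is 2-periodic) is that the kernel and cokernel of $\mathrm{Nm}_H$ are, up to a degree shift, $\hat H^*(H; N)$, which is annihilated by $|H| = n/d$. So $\mathrm{Nm}_H$ is a rational isomorphism, and more sharply its kernel and cokernel are bounded torsion groups.

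Next I would do the easy direction via induction on $|G|$ (equivalently on the number of divisors of $n$), or rather via induction on subgroups ordered by inclusion. Suppose $\phi : C \to D$ is an fp-equivalence. To show $\phi_H : C_H \to D_H$ is a quasi-isomorphism for every $H$, I would argue downward: for $H = G$, $C_G$ and $D_G$ fit into the norm sequence with $C^G$, $D^G$ and the relevant Tate cohomology; since $\phi^G$ is a quasi-isomorphism on $C^G \to D^G$ and $\phi^K$ is a quasi-isomorphism for the (finitely many, by cyclicity linearly ordered) proper subgroups — hence $\phi$ is a quasi-isomorphism on the Tate complexes $\hat C^*(K;C) \to \hat C^*(K;D)$ for all $K$, which for cyclic $G$ computes Tate cohomology of $C$ itself as a $\Z[G]$-module — I can conclude $\phi_G$ is a quasi-isomorphism from the long exact sequence comparing the two norm cofiber sequences. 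Then descend to smaller $H$: for each $H$, the same norm cofiber sequence relates $C_H, C^H$, and Tate cohomology $\hat C^*(H; C)$ (taken inside the group $H$, acting on $C$ restricted to $H$), and I already control $C^H \to D^H$ by hypothesis and the Tate pieces by the subgroups of $H$, which are fewer. Running the symmetric argument gives the converse direction.

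The main obstacle, and the step I would be most careful about, is making precise the identification of the "Tate complex" appearing in the norm cofiber sequence and checking that an fp-equivalence really does induce a quasi-isomorphism on all the relevant Tate complexes — this is exactly where cyclicity is essential, since for $G$ cyclic every subgroup is normal and totally ordered, so that $\hat H^*(H; -)$ for all $H$ can be built inductively out of fixed points alone; for $G = \Z/2 \times \Z/2$ the subgroup lattice is not a chain and this breakdown is precisely what the promised counterexample exploits. Concretely I expect to phrase the induction as: the collection of complexes $\{C^H\}_H$ together with the transfer/restriction maps determines $\{C_H\}_H$ up to quasi-isomorphism when $G$ is cyclic, because the cofiber of each norm map is reconstructible from fixed-point data of strictly smaller subgroups. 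Once that reconstruction statement is in hand, both implications follow by symmetry of the norm cofiber sequence (read left-to-right for one direction, right-to-left for the other), and the proof is complete.
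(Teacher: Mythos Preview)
Your approach has a genuine gap, and it also misses a much simpler route that the paper takes.

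First, the false side remark: the subgroups of a cyclic group are \emph{not} totally ordered in general (e.g.\ $\Z/2$ and $\Z/3$ inside $\Z/6$). Your induction leans on this, so the scaffolding is already shaky. More seriously, the step ``the cofiber of each norm map $\mathrm{Nm}_H:C_H\to C^H$ is reconstructible from fixed-point data of strictly smaller subgroups'' is exactly the claim that needs proof, and it is not clear how to establish it: the kernel and cokernel of the strict norm map are the degreewise Tate groups $\hat H^{-1}(H;C_n)$ and $\hat H^{0}(H;C_n)$, which involve \emph{both} $C_n^H$ and $(C_n)_H$. Controlling them from the fixed-point side alone, without already knowing something about cofixed points, is circular. (Your appeal to 2-periodicity of Tate cohomology is about the \emph{derived} Tate complex relating homotopy orbits to homotopy fixed points; here you are dealing with strict fixed and cofixed points, and the two stories do not match up in the way you need.)

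The paper's argument avoids the norm map entirely and uses a single endomorphism. For $H=\langle\alpha\rangle$ cyclic, one has
\[
N^H=\ker(1-\alpha:N\to N),\qquad N_H=\mathrm{coker}(1-\alpha:N\to N),
\]
so $N^H$ and $N_H$ are the kernel and cokernel of the \emph{same} map. After passing from the chain map to its mapping cone (so that the question becomes ``exact on all $(-)^H$ iff exact on all $(-)_H$''), and reducing from long to short exact sequences using left/right exactness of $(-)^H$ and $(-)_H$, one simply applies the snake lemma to
\[
0\to K\to N\to M\to 0\quad\text{against}\quad 1-\alpha,
\]
obtaining a six-term exact sequence linking $K^H,N^H,M^H$ to $K_H,N_H,M_H$. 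The fixed-point row is short exact iff the connecting map $M^H\to K_H$ vanishes iff the cofixed-point row is short exact. Since every subgroup of a cyclic group is of the form $\langle\alpha\rangle$, this handles all $H$ at once, with no induction and no Tate machinery. I would rewrite your argument around the map $1-\alpha$ rather than $\mathrm{Nm}_H$.
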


\begin{proof}
First, a chain map
\beg{GeneralChainMapDiagram}{\diagram
\dots \rto & C_n \dto\rto & C_{n-1} \dto\rto & C_{n-2} \dto\rto & \dots\\
\dots \rto & D_n \rto & D_{n-1} \rto & D_{n-2} \rto & \dots\\
\enddiagram
}
is a quasi-isomorphism if and only if the totalization of \rref{GeneralChainMapDiagram} (considering it as a
double chain complex) is an exact sequence. Hence it suffices to show a sequence is long exact on the fixed points with respect to every subgroup
if and only if it is long exact on the cofixed points with respect to every subgroup.
Then, using the fact that fixed points (and cofixed points) are a left (right) exact functor on representations, this statement is equivalent to the statement
obtained by
replacing ``long exact" with ``short exact."

Now suppose we have a short exact sequence
$$0 \rightarrow K\rightarrow N \rightarrow M \rightarrow 0.$$
Then for every element $\alpha \in G$, we have
\vspace{5mm}

\hspace{15mm}\begin{tikzcd}
 & 0\arrow[d]& 0\arrow[d] &0 \arrow[d] & \\
0\arrow[r] & K^{\langle \alpha\rangle}\arrow[r] \arrow[d]& N^{\langle \alpha\rangle}\arrow[r]\arrow[d] & M^{\langle \alpha\rangle}\arrow[d]\arrow[llddd, controls={+(2,-1) and +(-2,1)}]& \\
0 \arrow[r] & K \arrow[r]\arrow[d]{d}{1-\alpha} & N \arrow[r] \arrow[d]{d}{1-\alpha} & M\arrow[r]\arrow[d]{d}{1-\alpha} & 0\\
0 \arrow[r] & K \arrow[r]\arrow[d] & N \arrow[r] \arrow[d] & M\arrow[r]\arrow[d] & 0\\
 & K_{\langle \alpha\rangle}\arrow[r] \arrow[d]& N_{\langle \alpha\rangle}\arrow[r]\arrow[d] & M_{\langle \alpha\rangle}\arrow[d]\arrow[r]& 0\\
 & 0 & 0 & 0 & \\
\end{tikzcd}

\noindent and the Snake Lemma gives a connecting map
$\gamma: M^{\langle \alpha \rangle} \rightarrow K_{\langle \alpha \rangle}$
making a six-term long exact sequence.

Then it follows that we have a short exact sequence
$$0 \rightarrow K^{\langle \alpha\rangle} \rightarrow N^{\langle \alpha \rangle} \rightarrow M^{\langle \alpha \rangle}\rightarrow 0$$
if and only if $\gamma: M^{\langle \alpha \rangle} \rightarrow K_{\langle \alpha \rangle }$ is 0, which also happens if and only if we have a
short exact sequence
$$0 \rightarrow K_{\langle \alpha \rangle} \rightarrow N_{\langle \alpha \rangle} \rightarrow M_{\langle \alpha \rangle} \rightarrow 0.$$
This proves the statement for $H= \langle \alpha \rangle$ for every $\alpha \in G$, hence implying it for every subgroup of $G$, since they are all of that form.

\end{proof}

One might ask if a similar statement is true for general finite groups $G$. However, this is false.
For a counterexample, consider the $\Z[\Z/2 \times \Z/2]$-module
\beg{Z2Z2RepWithRelation}{M= \Z[\Z/2 \times \Z/2] / (\alpha h - \alpha + h -1)}
where $h, \alpha $ are the generators of the two copies of $\Z/2$.

\vspace{5mm}

\begin{proposition}
There exists a sequence
\beg{CounterExSequence}{0 \rightarrow K \rightarrow N \rightarrow M \rightarrow 0}
which is short exact after taking fixed points with respect to any subgroup $H \subseteq \Z /2 \times \Z/2$ while
\beg{CofixedNotInjective}{K_{\Z/2\times \Z/2}\rightarrow N_{\Z/2\times \Z/2}}
is not injective.
\end{proposition}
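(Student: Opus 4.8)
The plan is to exhibit a concrete short exact sequence $0 \to K \to N \to M \to 0$ of $\Z[\Z/2\times\Z/2]$-modules that splits after applying $(-)^H$ for every subgroup $H$, but does not split (is not even injective) after applying $(-)_{\Z/2\times\Z/2}$. The natural candidate for $N$ is a free module $\Z[\Z/2\times\Z/2]$, with the surjection $N \to M$ being the defining quotient map of \rref{Z2Z2RepWithRelation}, so that $K$ is the rank-one free submodule generated by $r := \alpha h - \alpha + h - 1$. First I would compute the fixed points of all three modules under each of the five subgroups $H \in \{1, \langle\alpha\rangle, \langle h\rangle, \langle\alpha h\rangle, \Z/2\times\Z/2\}$, using that for a group ring $\Z[G']$ one has $\Z[G']^H = \Z\cdot\left(\sum_{g\in H} g\right)\otimes \Z[G'/H]$-type descriptions, i.e. the fixed points are spanned by orbit sums. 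For $N = \Z[\Z/2\times\Z/2]$ this is routine; for $M$ one computes the image of these orbit sums under the quotient and checks when the surjection $N^H \to M^H$ is onto and its kernel equals $K^H$. The key numerical fact to verify is that for each $H$, the connecting map $\gamma\colon M^H \to K_H$ of the Snake Lemma diagram (the analogue of the diagram in the cyclic-group proposition, but now with a single generator of $H$, or a spanning set of $1-g$ for $g\in H$) vanishes, equivalently $N^H \to M^H$ is surjective.

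The second half is to check that \rref{CofixedNotInjective}, namely $K_{\Z/2\times\Z/2} \to N_{\Z/2\times\Z/2}$, is not injective. Since $K \cong \Z[\Z/2\times\Z/2]$ as a module, $K_{\Z/2\times\Z/2} \cong \Z$, generated by the class of $r$; and $N_{\Z/2\times\Z/2} \cong \Z$ as well. The map sends the class of $r = \alpha h - \alpha + h - 1$ to the class of the same element in $N_{\Z/2\times\Z/2} = \Z[\Z/2\times\Z/2]\otimes_{\Z[\Z/2\times\Z/2]}\Z$, where every group element maps to $1 \in \Z$; hence $r \mapsto 1 - 1 + 1 - 1 = 0$. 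So the generator of $K_{\Z/2\times\Z/2}\cong\Z$ maps to zero, and the map is the zero map on $\Z$, manifestly not injective. This is the crucial algebraic identity that makes the choice of the relator $r = \alpha h - \alpha + h - 1$ work: its augmentation-type coefficient sum after killing the group action is $0$, while $r$ is nonzero (indeed primitive, a basis element) in $K$.

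I would organize the write-up as: (1) define $N$, $K$, $M$, $r$ and the maps; (2) a lemma-style case check that $0 \to K^H \to N^H \to M^H \to 0$ is exact for each of the five subgroups, which by the Snake Lemma reasoning reduces to showing $N^H \to M^H$ is surjective — equivalently that the orbit sums generating $M^H$ lift to orbit sums in $N^H$; (3) the cofixed-point computation above. The main obstacle I anticipate is step (2) for the two "intermediate" subgroups $\langle\alpha\rangle$ and $\langle h\rangle$ (and the full group): one must carefully identify $M^H$ as an abelian group and confirm that the relation $\alpha h - \alpha + h - 1 = 0$ does not create new fixed vectors unaccounted for by images of $N^H$. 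Concretely, one checks that $M$ has a $\Z$-basis given by images of, say, $\{1, \alpha, h\}$ (since $\alpha h \equiv \alpha - h + 1$), works out the $\Z/2\times\Z/2$-action in this basis, and computes fixed subspaces directly; the surjectivity of $N^H \to M^H$ then follows because $N^H$ already surjects onto the span of $\{1,\alpha,h\}$ modulo the $H$-action in each case. Once the basis and action matrices are in hand the verification is a finite check; I would present the action explicitly and leave the five short exactness verifications as a brief case analysis rather than grinding each one in full.
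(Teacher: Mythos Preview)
Your plan has a genuine gap in step (2). With $N=\Z[\Z/2\times\Z/2]$ and the quotient map $\epsilon:N\to M$, the map $N^H\to M^H$ is \emph{not} surjective for $H=\Z/2\times\Z/2$ (nor for $\langle h\rangle$ or $\langle\alpha h\rangle$). For instance, $N^{\Z/2\times\Z/2}=\Z\{1+\alpha+h+\alpha h\}$, and under $\epsilon$ this norm element goes to $1+\alpha+h+(\alpha-h+1)=2(1+\alpha)$, while a direct computation in the basis $\{1,h,\alpha\}$ shows $M^{\Z/2\times\Z/2}=\Z\{1+\alpha\}$. So the image has index $2$ and the fixed-point sequence is not short exact. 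The paper repairs exactly this by enlarging $N$ to $\Z[\Z/2\times\Z/2]\oplus\Z$ with the extra trivial summand mapping to $1+\alpha$; that single extra generator is what makes $N^H\to M^H$ onto for every $H$.

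There is a second, related slip: $K$ is not isomorphic to $\Z[\Z/2\times\Z/2]$. The relator $r=\alpha h-\alpha+h-1$ satisfies $\alpha\cdot r=r$ and $h\cdot r=-r$, so the cyclic submodule it generates is a rank-one $\Z$-module (a sign representation), not a free $\Z[G]$-module. Consequently $K_{\Z/2\times\Z/2}\cong\Z/2$, not $\Z$. Your augmentation computation $r\mapsto 0$ in $N_{\Z/2\times\Z/2}$ is correct, so the cofixed-point map would still fail to be injective---but only once you have fixed $N$ so that the fixed-point hypothesis actually holds. After adding the trivial $\Z$ summand, $K$ becomes $\Z\{(r,0)\}\oplus\Z\{(1+\alpha+h+\alpha h,-2)\}$, giving $K_{\Z/2\times\Z/2}\cong\Z/2\oplus\Z$ mapping to $N_{\Z/2\times\Z/2}\cong\Z\oplus\Z$, which cannot be injective.
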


\begin{proof}
First note that the relation of \rref{Z2Z2RepWithRelation} is preserved up to sign by the $\Z/2 \times \Z/2$-action,
and thus $M$ is a free $\Z$-module with basis $\{1, h, \alpha\}$.
Additionally, rationally, it splits as a sum of a fixed representation generated by $1+ \alpha$, an $\alpha h$-fixed sign representation generated by
$h-1$, and an $h$-fixed sign representation generated by $\alpha h -1$.
It follows that
$$M^{\Z/2\times \Z/2} = M^{\langle \alpha \rangle} = \Z \{1+\alpha \}$$
$$M^{\langle \alpha \rangle} = \Z \{ \alpha h-1, 1+\alpha\}$$
$$M^{\langle \alpha h\rangle} = \Z \{ h-1 , 1+\alpha\}.$$
Also,
$$\Z[\Z/2 \times \Z/2]^{\Z/2 \times \Z/2} = \Z\{1+\alpha +h + \alpha h\} $$
$$\Z[\Z/2\times \Z/2] ^{\langle h \rangle} = \Z\{1+h , \alpha + \alpha h\} $$
$$\Z [ \Z/2 \times \Z/2]^{\langle \alpha h \rangle} = \Z\{ 1+ \alpha h, \alpha + h\}.$$
Considering the map $\epsilon : \Z [\Z/2 \times \Z/2]\rightarrow M$ given by $\epsilon (1) =1 $, we have
$$\Z[\Z/2 \times \Z/2]^{\Z/2 \times \Z/2} \ni 1+ \alpha + h + \alpha h \mapsto 2(1+ \alpha)$$
$$\Z [\Z/2\times \Z/2]^{\langle h \rangle} \ni \alpha h + \alpha \mapsto (\alpha h-1) + (1+\alpha) $$
$$\Z[\Z/2 \times \Z/2]^{\langle\alpha h\rangle} \ni \alpha = h \mapsto (\alpha +1 ) + (h-1).$$
Thus, putting $N = \Z [\Z/2 \times \Z/2] \oplus \Z$ (where $\Z$ is the trivial $\Z/2 \times \Z/2$-representation), the map
$$\lambda: N \rightarrow M$$ 
given by $\lambda = (\epsilon, 1+\alpha)$ is onto on $H$-fixed points for all
$H\subseteq \Z/2\times \Z/2$.
Let $K= \text{\em Ker}(\lambda)$.
Since fixed points are a left exact functor on representations, \rref{CounterExSequence} is short exact after taking fixed points with
respect to every subgroup $H\subseteq \Z/2 \times \Z/2$.

Now, by construction,
$$K = \langle (\alpha h-\alpha + h-1, 0) \rangle \oplus \langle (1+ \alpha + h + \alpha h, -2)\rangle\subseteq N$$
where $\Z/2 \times \Z/2$ acts by the sign representation fixing $\alpha$ on the first summand and by the trivial representation on the
second summand.
Hence,
$$K_{\Z/2 \times \Z/2} \cong \Z/2 \oplus \Z$$
coming from the first and second summand, respectively.
Meanwhile,
$$N_{\Z/2\times \Z/2} \cong \Z \oplus \Z,$$
and thus \rref{CofixedNotInjective} cannot be injective.

\end{proof}

\end{document}